\documentclass[12pt]{article}

\usepackage[left]{lineno}
\usepackage[utf8]{inputenc}
\usepackage{authblk}
\usepackage{lipsum}
\usepackage{amsfonts}
\usepackage{graphicx}
\usepackage{epstopdf}
\usepackage{algorithmic}
\usepackage{amsopn}
\usepackage{amssymb}
\usepackage{amsmath}
\usepackage{amsthm}
\usepackage{xcolor}
\usepackage{hyperref}
\usepackage{geometry}
\usepackage{titlesec}
\usepackage{array} 
\usepackage{multirow} 
\usepackage{url}
\usepackage{nicefrac}
\usepackage{dsfont}
\usepackage{bbm}
\usepackage{enumitem}
\usepackage{verbatim}
\usepackage{csquotes}
\usepackage{booktabs}
\usepackage{tikz}
\usepackage{pgfplots}
\usepackage{graphicx}
\usepackage{subcaption}
\pgfplotsset{compat=1.18}
\usepackage[english]{babel}

\usepackage[
    backend=biber,
    style=numeric, 
    doi=true,       
    url=false,
    isbn=false
]{biblatex}
\hypersetup{colorlinks=true, linkcolor=blue, citecolor=blue}
\titleformat{\section}{\normalfont\large\bfseries}{\thesection.}{0.5em}{}
\titleformat{\subsection}{\normalfont\normalsize\bfseries}{\thesubsection.}{0.5em}{}
\numberwithin{equation}{section}
\numberwithin{figure}{section}
\DeclareGraphicsExtensions{.pdf,.png,.jpg}
\graphicspath{{pic/}}
\makeatletter
\def\namedlabel#1#2{\begingroup
	#2.%
	\def\@currentlabel{#2}%
	\phantomsection\label{#1}\endgroup
}
\makeatother

\newtheorem{theorem}{Theorem}[section]

\newtheorem{lemma}[theorem]{Lemma}
\newtheorem{corollary}[theorem]{Corollary}
\newtheorem{remark}[theorem]{Remark}
\newtheorem{definition}[theorem]{Definition}

\DeclareMathOperator{\dist}{\operatorname{dist}}
\DeclareMathOperator{\supp}{\operatorname{supp}}
\renewcommand{\d}{\operatorname{d}\!}

\let\oldforall\forall
\renewcommand{\forall}{\oldforall\,}

\title{Optimal compression of kernel matrices by interpolets
with application to high-dimensional approximation}
\author[$\ast$]{Helmut Harbrecht}
\author[$\ast$]{Lucio Antonio Rosi}
\affil[$\ast$]{Departement Mathematik und Informatik, 
Universit\"at Basel, \newline 4051 Basel, Switzerland}
\date{\today}

\begin{document}
\maketitle
\begin{abstract}
We consider the compression of kernel matrices on the unit
interval $[0,1]$ by interpolets that have sufficiently many 
vanishing moments. We define a compression rule which 
discards most matrix coefficients without compromising the 
accuracy offered by the underlying discretization. Since
interpolets can be scaled such that the compressed kernel 
matrices are well conditioned, we derive a fully discrete 
scheme that solves a kernel interpolation problem under
consideration in linear overall complexity. We finally generalize 
this approach to the unit $n$-cube $[0,1]^n$ by means of 
the sparse grid combination technique. Numerical experiments
are carried out to validate the theoretical findings.
\end{abstract}

\section{Introduction}
Kernel surrogate models can be used in many applications,
see for example \cite{Fasshauer,Wendland} and the references 
therein. Such models are data-driven function approximators 
built on the principles of reproducing kernel Hilbert spaces 
(RKHS) that offer mesh-free interpolation with rigorous error 
bounds. Especially, they unify classical interpolation techniques 
with methods like support vector machines and Gaussian process 
regression for robust simulation and optimization, see 
e.g.~\cite{acta,SVM,GPL}.

Despite the many advantages, kernel approximation methods suffer
from the fact that the underlying linear systems of equations to be 
solved have notoriously ill-posed matrices which are typically 
densely populated. This limits the applicability of such methods
considerably. To circumvent this obstruction, one can apply 
multiscale interpolation using positive definite, compactly 
supported radial basis functions as developed in \cite{MS}, 
samplet based method \cite{GHM26,harbrecht_samplets_2022}, 
or the combination of both \cite{BPX}. We refer to \cite{acta} 
for a complete list of techniques available.

In this article, we follow another approach, which is similar in 
spirit to the use of periodic kernels on lattices \cite{lattices}
as our approach is also based on regular grids. We start with 
a regular grid on the interval and apply matrix compression by 
means of interpolets. Interpolets are interpolating wavelets and 
hence samplets on regular grids, cf.~\cite{cohen_numerical_2005,VISCARDI2019548}. 
Unlike samplets, interpolets satisfy norm equivalences, 
allowing for optimal preconditioning of the kernel 
matrices which avoids the ill-posedness of the resulting 
linear system of equations. In addition, a so-called second 
compression becomes applicable, so that we end up with a 
compression scheme for kernel matrices which results in 
sparse matrices whose number of coefficients scales only 
linearly in the number of grid points without deteriorating 
the accuracy of the underlying kernel approximation method.
As our approach does not rely on the shift-variance of the 
kernel under consideration, the proposed approach applies 
also to quasi-uniform grids if these are a sufficiently 
smooth perturbation of the regular grid under consideration.

We show first that the one-dimensional kernel matrices
can be compressed such that the number of relevant
coefficients scales only linearly in the number of
interpolation points by using techniques developed 
in \cite{dahmen_compression_2006,harbrecht_samplets_2022}. In 
combination with a diagonal scaling, we obtain well-conditioned,
sparse linear systems of equations that can be solved in linear 
over-all complexity. We emphasize that this gain is achieved
with interpolets that provide a fixed order of vanishing 
moments. This is the major difference from the samplet based 
method proposed in \cite{GHM26,harbrecht_samplets_2022}, where 
the number of vanishing moments needs to be increased in order 
to achieve higher accuracy of the matrix compression. As a 
consequence, the method we propose here is also applicable 
to reproducing kernel which provide only a limited smoothness
apart from the diagonal. Another huge advantage is that 
interpolets are defined by a fixed two-scale relation being
independent of their level which makes the matrix assembly 
easy as each relevant matrix coefficient is just a fixed 
weighted sum of point evaluations of the kernel and hence 
computable in constant time without using techniques like the 
multipole method or $\mathcal{H}$-matrices \cite{Greengard,Hackbusch} 
as required for the samplet approach.

Having the one-dimensional kernel method at hand, we finally 
use sparse grids for kernel approximation in higher dimension. 
By means of the sparse grid combination technique proposed in
\cite{GHM26}, one arrives at an approach which approximates a given 
function on the $n$-dimensional unit cube $[0,1]^n$ with a complexity 
that scales linearly in the number of interpolation points which 
in turn depend only mildly on the spatial dimension $n$. As our 
numerical experiments show, we indeed proceeded in developing 
a kernel approximation method which is able to efficiently 
compute surrogates also in high dimensions.

The layout of this article is as follows:
Section~\ref{sec: RKHS} establishes the reproducing kernel 
Hilbert space setting. Section~\ref{sec: mult} introduces 
general multiresolutions for univariate wavelets, with a 
specific focus on interpolets. In Section~\ref{sec:kern mat} 
we present how they are applied in the context of kernel 
compression, followed by a brief discussion on preconditioning.
We next show the basic decay estimates for the kernel matrices 
in Section~\ref{sec: estims}. The decay estimates are 
needed in Section~\ref{sec: compr} to show the compression 
techniques adopted in this article. In Section~\ref{sec: conv}, 
the over-all error estimate is proved. Moreover, in 
Section~\ref{sec: sparse}, we extend our concept to the 
general multiresolution setting by means of the sparse
grid combination technique. Our theoretical findings are 
complemented by numerical results for the unit interval and
also for the high-dimensional $n$-cube in Section~\ref{sec: numerics}.
Finally, we draw the article's conclusion in Section~\ref{sct:conclusio}.
\section{Reproducing kernel Hilbert spaces}\label{sec: RKHS}
We first briefly recall some well-known facts about
kernel approximation methods. To this end, let 
$\Omega\subset\mathbb{R}^n$, $n\in\mathbb{N}$, be 
a Lipschitz-smooth region. We start with the 
following definition:

\begin{definition}\label{def:RKHS}
    A \emph{reproducing kernel} for a Hilbert space 
    $\mathcal{H}$ of functions \(f\colon\Omega\to\mathbb{R}\)
    with inner product $(\cdot,\cdot)_{\mathcal{H}}$ is a 
    function $\kappa\colon\Omega\times\Omega\to\mathbb{R}$ such that
    \begin{enumerate}
        \item $\kappa(\cdot,y)\in\mathcal{H}$ for all $y\in\Omega$,
        \item $f(y) = \big(f,\kappa(\cdot,y)\big)_\mathcal{H}$ 
        for all $f\in\mathcal{H}$ and all $y\in\Omega$.
    \end{enumerate}
    A Hilbert space $\mathcal{H}$ with reproducing kernel 
    $\kappa\colon\Omega\times\Omega\to\mathbb{R}$ is called
    \emph{reproducing kernel Hilbert space} (RKHS). 
\end{definition}

A continuous kernel $\kappa:\Omega\times\Omega\to\mathbb{R}$ is 
called \emph{positive semidefinite} on $\Omega\subset\mathbb{R}^n$ if 
\begin{equation}\label{eq:spd}
    \sum_{i,j=1}^N u_iu_j
    \kappa (x_i,x_j) \geq 0
\end{equation}
holds for all all mutually distinct
points $x_1,\ldots,x_N\in\Omega$ and 
all $u_1,\dots,u_N\in\mathbb{R}$, for any $N\in\mathbb{N}$. 
The kernel is even \emph{positive definite} if the inequality in 
\eqref{eq:spd} is strict whenever at least one \(u_i\) 
is different from $0$.

\begin{remark}\label{rem: Matern}
    An important class of reproducing kernels is the 
    class of Mat\'ern kernels, also known as Sobolev splines, 
    given for the smoothness parameter $\nu > 0$ by
    \begin{equation*}
        \kappa_\nu(x,y) := \frac{2^{1-\nu}}{\Gamma(\nu)}(\sqrt{2\nu}r)^{\nu} K_{\nu}(\sqrt{2\nu} r), 
        \qquad r:= \frac{1}{\sigma}\|x - y\|_2.
    \end{equation*}
    Here, $\Gamma$ is the Riemannian gamma function,
    $K_\nu$ is the modified Bessel function of second 
    kind and $\sigma > 0$ is the correlation length.
    These kernels are the reproducing kernels of the 
    Sobolev spaces $H^{\nu + \frac n 2}(\mathbb{R})$. 
    For a more comprehensive study and further properties, 
    we refer to \cite{matern_spatial_1986}.
\end{remark}

Given a set $X = \{x_1,\ldots,x_N\}$ of $N$ mutually 
distinct data sites, we introduce the \emph{kernel translates}
$\phi_j := \kappa(\cdot,x_j)$ for $j=1,\dots,N$. If the 
kernel \(\kappa\) is positive definite, these kernel translates 
span the $N$-dimensional subspace 
\[
    \mathcal{H}_X :=  \operatorname{span}\{\phi_1,\ldots,\phi_N\}\subset\mathcal{H}.
\]
The best approximation $f_X\in\mathcal{H}_X$ of a function
$f\in\mathcal{H}$ with respect to $\mathcal{H}$ amounts to its 
$\mathcal{H}$-orthogonal projection onto $\mathcal{H}_X$. The 
latter can be obtained as the solution of the variational 
formulation
\begin{equation}\label{eq:Galerkin}
    \text{find}\ f_X\in\mathcal{H}_X,\ \text{such that}\ 
  	(f_X,v)_{\mathcal{H}} = (f,v)_{\mathcal{H}}\ \ \text{for all $v\in\mathcal{H}_X$}.
\end{equation}
In view of the reproducing property, i.e., the second property from 
Definition~\ref{def:RKHS}, the ansatz $f_X = \sum_{i=1}^N 
u_i\phi_i$ leads to the linear system of equations
\begin{equation}\label{eq:LSE}
    {\mathbf K}_X {\mathbf u}_X = {\mathbf f}_X
\end{equation}
with the kernel matrix
\begin{equation}\label{eq:kernel matrix}
    {\mathbf K}_X = \begin{bmatrix} 
    \kappa(x_1,x_1)&\cdots&\kappa(x_1,x_N)\\
    \vdots&\ddots&\vdots\\
    \kappa(x_N,x_1)&\cdots&\kappa(x_N,x_N)\end{bmatrix},
\end{equation}
the solution vector ${\mathbf{u}}_X = [u_1,\ldots,u_N]^T$,
and the right-hand side ${\mathbf f}_X = [f(x_1),\dots,f(x_N)]^T$.
Having finally the kernel interpolant $f_X\in\mathcal{H}_X\subset\mathcal{H}$ 
at hand, it can be evaluated in accordance with $f_X(x) = \sum_{i=1}^N 
u_i\phi_i(x)$ for any desired position $x\in\Omega$.

One readily observes that the resulting system 
\eqref{eq:LSE} of equations coincides with the one 
for the generalized Vandermonde matrix for the interpolation
at the data sites in $X$, i.e.
\begin{equation}\label{eq:Nystroem}
    f_X(x_j) = \sum_{i=1}^N u_i\phi_i(x_j) \overset{!}{=} f(x_j)
    \quad\text{for $j=1,\dots,N$}.
\end{equation}
This means that, within the RKHS framework, the best 
approximation $f_X\in\mathcal{H}_X$ of a function $f\in\mathcal{H}$ 
is given by the interpolant for the data sites $X$. This is also referred 
to as \emph{kernel interpolation}. Associated error estimates are found in
\cite{Wendland}.

Defining the \emph{fill distance} 
\[
    h_{X,\Omega} :=\sup_{x \in \Omega} \min_{x_i \in X} \| x - x_i \|_2
\]
and the \emph{separation distance}
\[
    q_X :=\min_{i \neq j} \| x_i - x_j \|_2,
\]
one has
\begin{equation}\label{eq:error1A}
    \|f-f_X\|_{L^2(\Omega)}\lesssim h_{X,\Omega}^s \|f\|_{H^s(\Omega)},
\end{equation}
provided that the norm in $\mathcal{H}$ is isomorphic 
to the norm in Sobolev space $H^s(\Omega)$. If the \emph{doubling 
trick} applies in addition for functions $f\in H^{2s}(\Omega)$, 
then one even has
\begin{equation}\label{eq:error1B}
    \|f-f_X\|_{L^2(\Omega)}\lesssim h_{X,\Omega}^{2s} \|f\|_{H^{2s}(\Omega)},
\end{equation}
see \cite{GHM26,Schaback,sloan2024doublingrateimproved}.
In what follows, our analysis is tailored to this situation 
which ensures that we always achieve the best possible 
error estimate after the matrix compression.
\section{Multiresolution analysis and interpolets} \label{sec: mult}
We start this section by giving some basic definitions in 
the general setting. 

Given an interval $\Omega \subseteq
\mathbb{R}$, the idea of multi-resolution analysis is 
based on the discretization of the space $L^2(\Omega)$ 
into a nested sequence of spaces $\{V_j\}_{j_0 \leq j < J}$ such that 
$V_{j}\subset V_{j +1}$. 

When $\Omega = \mathbb{R}$, 
the approximation spaces $V_j$ are usually defined 
through a basis $\{\phi_{j,k}\}_{k\in\mathbb{Z}}$, which 
is generated by translations and dilatations $\phi_{j,k} 
:= \phi(2^j \cdot-k)$ of a scaling function $\phi$, where 
$k\in\mathbb{Z}$, see for instance \cite{dubuc_spline_1999}. 
If the domain is bounded, one in 
general has to take care of the boundary functions via 
modifications using different methods discussed, for example, in 
\cite{de_villiers_dubuc--deslauriers_2003, sweldens_lifting_1996}.
For the moment, we will begin our discussion on the line 
$\Omega = \mathbb{R}$, and thereafter we explain how 
to consider our case of interest of bounded intervals.

The function $\phi$ is called $\textit{refinable}$ if it satisfies an 
equation of the type
\begin{equation*}
    \phi(x) = \sum_{k \in \mathbb{Z}} h_k \phi(2x - k),
\end{equation*}
from which it is possible to define the bases $\Phi_j := 
\{\phi_{j,k}: k \in \Delta_j\}$ of $V_j$, where $\Delta_j$ 
denotes suitable index sets of cardinality $\dim V_j$. It 
is well known that it is possible to have multiresolution 
analysis with orthonormal refinable functions, see for 
example \cite{daubechies_orthonormal_1988} for a famous 
class of orthonormal wavelets with compact support.

On the other hand, one can achieve more flexibility it if does 
not ask for orthogonality, but sticks to biorthogonality. To this 
end, we shall also introduce, similarly, the dual spaces $\{\widetilde{V}_j\}_{j_0 \leq j < J}$ and the dual scaling function
\begin{equation*}
    \widetilde{\phi}(x) = \sum_{k\in \Delta_j} \widetilde{h}_k \widetilde\phi(2x -k)
\end{equation*}
such that
\begin{equation}\label{eq: biorth constr}
    \langle \phi_{j,k}, \widetilde{\phi}_{j,k'} \rangle = \delta_{k,k'} \quad \forall k,k' \in \Delta_j.
\end{equation}
This allows to build projectors of the form 
\begin{equation*}
    P_jf := \sum_{k \in \Delta_j} \langle f, \widetilde{\phi}_{j,k} \rangle \phi_{j,k}, 
    \qquad P_j^\star f := \sum_{k \in \Delta_j} \langle f, \phi_{j,k} \rangle \widetilde{\phi}_{j,k} 
\end{equation*}
where $\langle \cdot,\cdot \rangle$ in the usual setting is the 
$L^2$-scalar product.

By introducing the sets $\nabla_j := \Delta_{j+1} \setminus \Delta_j$, 
it is also possible to define the biorthogonal wavelet bases 
\[
    \Psi_j = \{ \psi_{j,k}: k\in \nabla_j\}, \quad
    \widetilde{\Psi}_j = \{ \widetilde{\psi}_{j,k}: k\in \nabla_j\}, 
\]
such that $\langle \psi_{j,k}, \widetilde{\psi}_{j',k'} \rangle 
= \delta_{(j,j'),(k,k')}$. Denoting by $W_j, \widetilde{W}_j$ 
the span of $\Psi_j, \widetilde{\Psi}_j$, respectively, one has
\begin{equation*}
    V_{j+1} = W_j \oplus V_j, \quad 
    \widetilde{V}_{j+1} = \widetilde{W}_j \oplus \widetilde{V}_j, \quad 
    \widetilde{V}_j \perp W_j, \quad 
    V_j \perp \widetilde{W}_j.
\end{equation*}
From this, we see that $V_j$ and $\widetilde{V}_j$ can be written as a 
direct sum of the spaces $W_j$ and $\widetilde{W}_j$, $j_0 \leq j < J$ 
(by fixing $j_0 - 1$ as the coarsest level of resolution and using the convention 
$W_{j_0} := V_{j_0-1}$ and $\widetilde W_{j_0} := \widetilde V_{j_0-1}$, 
$P_{j_0} = P^\star_{j_0} := 0$). In fact, for $f_J \in V_J$, we have
\begin{equation*}
    f_J = \sum_{j = j_0}^{J-1} Q_{j+1}f_J,
\end{equation*}
where
\begin{equation*}
    Q_{j+1}:=P_{j+1} - P_j, \quad 
    Q_{j+1}f = \sum_{k \in \nabla_j} \langle f, \widetilde \psi_{j,k}\rangle \psi_{j,k}.
\end{equation*}

A biorthogonal pair of wavelet basis is now obtained by taking 
the coarse level basis and the union of the complements: 
\begin{equation*}
  \Psi_J = \bigcup_{j\geq j_0}^{J -1} \Psi_j, \quad 
  \widetilde \Psi_J = \bigcup_{j\geq j_0}^{J -1} \widetilde\Psi_j, 
\end{equation*}
where for convenience $\Psi_{j_0} := \Phi_{j_0 +1}$, and 
respectively for $\widetilde \Psi_{j_0}$. We will refer to 
them as primal and dual basis.

In our article, we consider a special case of scaling functions,
namely \emph{interpolating scaling functions}. They have been 
widely studied (see, for example, \cite{cohen_biorthogonal_1992,
de_villiers_dubuc--deslauriers_2003,ji_compactly_1999,
sweldens_lifting_1996} and the references therein) on 
both the interval and the line.

\begin{definition}
    We say that a function $\phi$ is interpolatory if it satisfies
    \begin{equation*}
        \phi(k) = \delta_{0,k} \quad\text{for all}\ k \in \mathbb Z.
    \end{equation*}
\end{definition}

If we consider the dual basis to be interpolatory, the 
biorthogonality between bases suggests a very simple 
choice of the primal scaling function: $\phi(x) = \delta_0(x)$, 
from which we can define $\phi_{j,k} = 2^{-\frac {j} 2}
\delta_{2^{-j}k}( \cdot)$ and, by \eqref{eq: biorth constr}, 
the dual basis functions will be scaled by a factor of $2^\frac{j}{2}$. 
This normalizes the dual basis with respect to the $L^2$-norm.
The problem is that, obviously, $\delta$ is not a function but a 
distribution. In particular, it is well known that we have 
$\delta\in H^{-s}(\Omega)$ for any $s >\frac 1 2$, thus it 
is necessary to adapt the definitions given above in order 
to be well defined. Here and in the following, $H^s(\Omega)$ 
denotes for $s \geq 0$ the standard (fractional) Sobolev space
while we denote its dual by
\[
    H^{-s}(\Omega) := \big(H^s(\Omega)\big)'.
\]
With this notation at hand, we consider the operator 
$T:H^{-s}(\Omega) \times H^s(\Omega)\to \mathbb R$ for 
$s>\frac 1 2$, so that, for $\phi_{j,k} \in H^{-s}(\Omega)$
and $\widetilde \phi_{j',k'}\in H^s(\Omega)$
\[
    T(\phi_{j,k}, \widetilde \phi_{j',k'}):=\langle \phi_{j,k}, 
    \widetilde \phi_{j',k'} \rangle_{H^{-s} \times H^s} =
    2^{-\frac{j}{2}}\widetilde\phi_{j',k'}(2^{-j}k). 
\]
Recall that, $H^s(\Omega) \subset C(\Omega)$ for $s > \frac 1 2$
by the Sobolev embedding, so that the above transformation 
is well defined.

To simplify notation, we denote the duality product $\langle\cdot,\cdot 
\rangle_{H^{-s} \times H^s}$ simply as $\langle\cdot,\cdot \rangle$ in
the following. The distinction between the duality product and the 
standard inner product will be clear from the context, and where 
ambiguity remains, we will specify the pairing explicitly. Moreover, 
it is important to notice that, while $\widetilde V_{j} \subset L^2(\Omega)$, 
this is no longer the case for $V_j$. We only have $V_j \subset 
H^{-s}(\Omega)$ for all $s > \frac 1 2$.

Since we are interested in the case $\Omega = [0,1]$, 
\cite[Theorem 3.1]{dohono_interpolating_1994} gives us the existence 
of primals $\phi_{j,k}, \psi_{j,k}$ and duals $\widetilde \phi_{j,k}, 
\widetilde \psi_{j,k}$ such that, for all $f_J \in V_J$, we have
\begin{equation*}
    f_J = \sum_{j = j_0}^{J-1}Q_{j+1}f_J.
\end{equation*}

For the subsequent analysis it is convenient to define the two following parameters: since the dual subspaces consist of  interpolatory basis, which are continuous, we say that they have \emph{regularity} $\widetilde\gamma\in\mathbb{R}$ if
\begin{equation}\label{eq:tgamma}
    \widetilde \gamma = \sup\big\{s \in \mathbb{R}: 
    \ \widetilde V_j \subset H^s(\Omega)\big\}
\end{equation}
and \emph{approximation order} $\widetilde d\in\mathbb{N}$ if
\begin{equation*}
    \widetilde d = \sup\bigg\{s\in \mathbb{R}: 
    \inf_{\widetilde v_j \in \widetilde V_j}\|v - \widetilde v_j\|_{L^2(\Omega)} 
    \lesssim 2^{-js}\|v\|_{H^s(\Omega)}\ \forall v\in H^s(\Omega)\bigg\}.
\end{equation*}
Since the primal spaces have no approximation order $d$ in 
$L^2(\Omega)$, we will set $d := 0$ and define only $\gamma$ as
\begin{equation*}
    \gamma = \sup\{s \in \mathbb{R}: \ V_j \subset H^s(\Omega) \}
\end{equation*}
which, in our case, gives us the value $\gamma = -\frac{1}{2}$.

\begin{remark}
    A famous class of interpolating wavelets is the Dubuc-Deslauriers 
    wavelet \cite{dohono_interpolating_1994, VISCARDI2019548}, which has compact support 
    and is symmetric, and is also correlated to the class of orthonormal 
    wavelets of Daubechies \cite{daubechies_orthonormal_1988}. The 
    H\"older regularity $\alpha$ and the Sobolev regularity $\widetilde{\gamma}$ 
    given by \eqref{eq:tgamma} of the interpolets are strictly related via 
    the continuous Sobolev embedding theorem in one dimension, 
    $H^s(\mathbb{R})\hookrightarrow C^{s-1/2}(\mathbb{R})$. This 
    establishes the upper bound $\widetilde{\gamma}\le\alpha + \frac{1}{2}$. 
    The values of the H\"older regularity in case of the lowest order
    wavelets (which have the approximation order $\widetilde d  = 2N$
    with $N$ from \cite{daubechies_orthonormal_1988}) are summarized 
in Table~\ref{tab:dd_regularity}.
\end{remark}

\begin{table}[hbt!]
    \centering
    \begin{tabular}{cc}
    \toprule
    \textbf{Order ($\widetilde d$)} &  \textbf{H\"older regularity ($\alpha$)} \\
    \midrule
    $2$   & $1.0000$ \\
    $4$   & $2.0000$ \\
    $6$   & $2.8300$ \\
    $8$   & $3.5511$ \\
    $10$   & $4.1935$ \\
    \bottomrule
    \end{tabular}
    \caption{H\"older regularity exponents for Dubuc-Deslauriers 
    interpolating scaling functions, see \cite{rioul_simple_1992}.}
    \label{tab:dd_regularity}
\end{table}
 Adapting the framework from \cite{sweldens_lifting_1996}, the primal wavelet acts as a discrete functional defined by a linear combination of Dirac distributions:
\begin{equation*} 
    \psi_{j,k} = 2^{- \frac j 2}\bigg(\delta_{x_{j+1,2k+1}}(\cdot ) - \sum_{\ell} p_{k,\ell} \, \delta_{x_{j+1,2\ell}}(\cdot)\bigg),
\end{equation*}
where $x_{j,k} = 2^{-j} k$, and the weights $p_{k,\ell}$ are 
exactly the interpolation weights associated with the dual 
basis. This shows that $\psi_{j,k}$ is a discrete signed 
measure with finite support. Therefore, we can define the 
following two sets:
\begin{equation}\label{def:supports}
    \Omega_{j,k} := \text{conv hull}(\supp\psi_{j,k}),\quad
    \Omega'_{j,k} := \supp\psi_{j,k}.
\end{equation}
Notice that $\Omega_{j,k}\sim 2^{-j}$ while $\Omega_{j,k}'$ 
is a set of zero measure, as it consists of a finite number of 
points. The special property of $\Omega_{j,k}'$ will be very useful 
later when discussing the second compression of the matrix, 
i.e.\ Theorem \ref{th: second_compr}.

The compressibility of the kernel matrix originates
from the vanishing moments of the primal wavelets. Because 
the interpolation weights $p_{k,\ell}$ are designed to reproduce 
polynomials up to degree $\widetilde d - 1$, the application 
of $\psi_{j,k}$ kills any polynomial of degree strictly less 
than $\widetilde d$. Mathematically, this implies that the 
primal wavelets possess \textit{discrete vanishing moments} 
of order $\widetilde{d}$, meaning that
\begin{equation}\label{eq:vanishing moments}
    \langle x^\alpha, \psi_{j,k} \rangle = 0 
        \quad \forall 0 \le \alpha < \widetilde{d}.
\end{equation}
More generally, one has for sufficiently smooth 
functions $v\in W^{\widetilde{d},\infty}(\Omega)$ that
\begin{equation}\label{eq:cancellation}
    |\langle v, \psi_{j,k} \rangle | 
    \lesssim 2^{-j(\widetilde{d} + \frac 1 2)}|v|_{W^{\widetilde{d},\infty}(\Omega_{j,k})},
\end{equation}
where the semi-norm on the right-hand side is given by
\[
    |v|_{W^{\widetilde{d},\infty}(\Omega_{j,k})} := 
    \text{ess}\sup_{\, x \in \Omega_{j,k}}|\partial^{\widetilde{d}} v(x)|.
\]

Finally, to work with the spaces $V_j$ and $\widetilde V_j$ in our 
multiresolution setting, we will need that the following 
Jackson and Bernstein type estimates hold
\begin{equation}\label{eq: jack}
    \|v - P_jv\|_{H^s(\Omega)} \lesssim 2^{-j(t-s)}\|v\|_{H^t(\Omega)}, 
        \quad v\in H^t(\Omega),
\end{equation}
for all $-\widetilde d\leq s\leq t \leq d$, $s< \gamma$, 
$-\widetilde\gamma < t$, and
\begin{equation}\label{eq: bern}
    \|P_jv\|_{H^s(\Omega)} \lesssim 2^{j(s-t)} \|P_jv\|_{H^t(\Omega)}, 
        \quad v\in H^t(\Omega),
\end{equation}
for all $-\widetilde\gamma \leq t\leq s \leq \gamma$. 

We also want that properly scaled versions of the bases are stable, 
i.e.\ \emph{Riesz bases}, for a whole range of Sobolev spaces. Respective 
results have been proven in \cite{Dahmen_1997,dohono_interpolating_1994}, 
observing that $\mathcal{B}^s_{2,2}(\Omega) = \mathcal{H}^s(\Omega)$, where 
$\mathcal{B}^s_{p,q}(\Omega)$ defines the Besov space on $\Omega$ with 
parameters $s,p,q$, see \cite{triebel1978interpolation}. Corresponding 
estimates hold also for the adjoint operator by trivial duality arguments. 
This means that we have the following \emph{norm equivalences}:
\begin{equation}\label{eq:norm equivalences}
    \begin{aligned}
        \|v\|_{H^s(\Omega)}^2 \sim &\sum_{j =j_0}^{J-1}
        \sum_{k \in \nabla_j} 2^{2js}|\langle v,\widetilde \psi_{j,k}\rangle |^2, 
        \quad s \in (-\widetilde{\gamma}, \gamma), \\
        \|v\|_{H^s(\Omega)}^2 \sim& \sum_{j = j_0}^{J-1}
        \sum_{k \in \nabla_j} 2^{2js}|\langle v, \psi_{j,k}\rangle |^2, 
        \quad s \in (-\gamma, \widetilde \gamma).
    \end{aligned}
\end{equation}
\section{Kernel matrices discretized by interpolets}\label{sec:kern mat}
Within the RKHS framework, the Galerkin formulation 
\eqref{eq:Galerkin} of the kernel approximation problem 
and the Nystr\"om method \eqref{eq:Nystroem} coincide,
compare also \cite{hairer_theory_1975}. This leads
to the natural use of interpolets as biorthogonal 
wavelets. In the next section, it will be shown that the 
sparsity of the system matrix is governed by the properties 
of the biorthogonal bases, following the framework of 
\cite{dahmen_compression_2006,harbrecht_samplets_2022}.
To this end, recall that, in our setting, the dual scaling 
functions $\tilde\phi_{j,k}$ are interpolatory, while the 
primal space is spanned by Dirac distributions. 

If the underlying reproducing kernel Hilbert space
$\mathcal{H}$ satisfies $\mathcal{H}\cong H^{-q}([0,1])$, 
we can define the integral operator $\mathcal K: H^{q}(\Omega)
\to H^{-q}(\Omega)$ of order $2q< 0$ such that 
$\mathcal K\delta_{x_i} = \kappa(\cdot,x_i)$, where 
in our case $\Omega = [0,1]$. Moreover, given $\delta_{x_i},
\delta_{x_j}$, we have 
\begin{equation*}
    \langle \delta_{x_i},\mathcal K\delta_{x_j}\rangle = 
    \langle\delta_{x_i},\kappa(\cdot,x_j)\rangle 
    = \kappa(x_i,x_j).
\end{equation*}
Thus, the discretization of the kernel matrix by 
interpolets yields 
\begin{equation}\label{eq:wavelet-matrix}
    \mathbf K^{\psi}_J = [\langle \psi_{{j,k}}, \mathcal{K} 
    \psi_{{j',k'}} \rangle]_{k\in\nabla_j,\,k'\in\nabla_{j'},\,j_0\leq j,j'<J}.
\end{equation}
Herein, the entries of the matrix $\mathbf K^{\psi}_J$ are not 
$L^2$-integrals, but discrete interactions involving the measures 
$\{\psi_{j,k}\}$. For a kernel $\kappa$, the entry at indices 
$(j,k)$ and $(j',k')$ is:
\begin{equation*}
    (\mathbf K^{\psi}_J)_{(j,k), (j',k')} = \langle \psi_{{j,k}}, \mathcal{K} \psi_{{j',k'}} \rangle 
    = \sum_{x \in \Omega'_{j,k}} \sum_{y \in \Omega'_{j',k'}} \omega_{{j,k}}(x) \omega_{{j',k'}}(y) \kappa(x, y).
\end{equation*}
In particular, as shown in \cite{Dahmen_1997}, one can relate 
the matrices \eqref{eq:kernel matrix} and \eqref{eq:wavelet-matrix} 
by looking at the discretization by interpolets as a change of 
basis. This means that, given the wavelet transform $\mathbf{T}_J$, 
one has
\[
    \mathbf K^{\psi}_J = \mathbf{T}_J^T \mathbf{K}_X \mathbf{T}_J.
\]
However, in principle the matrix multiplication would be 
computationally expensive, since $\mathbf K_X$ is dense. So, 
instead of building the matrix $\mathbf T_J$, its application 
is done efficiently be means of the \emph{fast wavelet transform}. 
We refer in particular to \cite{sweldens_lifting_1996}, as already 
mentioned in Section \ref{sec: mult}, given our choice of wavelet basis.  

The discrete problem related to the matrix $\mathbf{K}^\psi_J$ 
is formulated as follows:
\begin{equation}\label{eq: var form}
    \begin{aligned}
        &\text{Find a discrete measure $u_J \in V_{J}(\Omega)$ such that}\\
        &\hspace*{20ex}\langle \mathcal{K}u_J, v_J \rangle = \langle f, v_J \rangle 
        \quad \forall v_J \in V_{J}(\Omega).
    \end{aligned}
\end{equation}
By choosing our test functions to be the primal wavelets 
$\Psi_J$, this translates into:
\begin{align*}
    &\text{Find $u_J \in V_{J}(\Omega)$ such that}\\
    &\hspace*{10ex}\langle \mathcal{K}u_J, \psi_{j,k} \rangle = \langle f, \psi_{j,k} \rangle 
    \quad \forall \psi_{j,k} \in \Psi_J.
\end{align*}
In analogy with Section~\ref{sec: RKHS}, let $\mathbf{u}_{J}^\psi$ denote the vector 
of coefficients obtained by solving the system in the wavelet basis, i.e.,  
$\mathbf K^{\psi}_J \mathbf u_J^\psi = \mathbf f_J^\psi$.

The abstract discrete measure $u_{J} \in V_{J}(\Omega)$ can be expanded 
in both the wavelet basis $\Psi$ and the nodal basis of Dirac distributions 
$\Phi_{J} = \{\delta_{x_i}\}_{i=1}^N$:
\begin{equation}\label{eq: wav-nodal}
    u_{J} = \sum_{j,k} u_{j,k}^{\psi} \psi_{j,k} = \sum_{i=1}^{N} u_{J,i}^{\phi} \delta_{x_{i}},
\end{equation}
for some vectors of coefficients $\mathbf u_J^\phi$ since $\psi_{j,k}$ 
is just a sum of Dirac distributions on the data sites. Applying 
$\mathcal{K}$ to this measure yields the exact kernel interpolant:
\begin{equation}\label{eq: kern interp}
    f_{J} = \mathcal{K}u_{J} =\sum_{j,k} u_{j,k}^\psi \langle \mathcal{K}\psi_{j,k},\cdot\rangle 
    = \sum_{i=1}^{N} u_{J,i}^{\phi} \mathcal{K}\delta_{x_{i}} 
    = \sum_{i=1}^{N} u_{J,i}^{\phi} \kappa(\cdot,x_{i}).
\end{equation}

Finally, it is well-known that \eqref{eq:kernel matrix} and 
\eqref{eq:wavelet-matrix}, respectively, become ill-conditioned 
as $J$ increases, see \cite{Wendland} for example. Precisely, 
we have $\text{cond}_{\ell^2}(\mathbf K^{\psi}_J)\sim 2^{2J|q|}$. 
Because of the norm equivalences \eqref{eq:norm equivalences},
there exists a simple preconditioning strategy that bounds 
the condition number uniformly with respect to $J$, if 
the kernel matrix is discretized by interpolets, see 
\cite{Dahmen_1997,dahmen_compression_2006,DK92}. In fact, 
by defining the \emph{diagonal preconditioner} $\mathbf{D}_J^r$, 
where $r\in\mathbb{R}$, in accordance with
\[
    [\mathbf{D}_J^r]_{(j,k),(j',k')} = 2^{rj} \delta_{(j,k),(j',k')}, 
    \quad k \in \nabla_j,\ k' \in \nabla_{j'},\ j_0 \leq j,j' < J,
\]
if $\tilde \gamma > -q$, then $\mathbf{D}_J^{2q}$ is an 
asymptotically optimal for $\mathbf{K}^\psi_J$, meaning that
\begin{equation}\label{eq: preconditioner}
    \text{cond}_2(\mathbf{D}_J^{-q} \mathbf{K}^\psi_J \mathbf{D}_J^{-q}) \sim 1. 
\end{equation}
Note that a simple diagonal scaling performs usually 
even better in the computational praxis since the diagonal 
of the kernel matrix behaves like $\mathbf{D}_J^{2q}$.
\section{Estimates for kernel matrices}\label{sec: estims}
We can now address the central problem: the efficient 
representation of kernel matrix \eqref{eq:wavelet-matrix}. 
To this end, we state the following definition, which is 
fulfilled by many typical reproducing kernels. We emphasize
that the kernel is not required to be shift-invariant, which
means that it may depend on $x$ and $y$.

\begin{definition}
    The kernel $\kappa(x,y):\Omega\times\Omega\to\mathbb{R}$ 
    is called \emph{standard kernel} of order $2q$ if the following 
    estimate holds
    \begin{equation}\label{eq: as smooth}
        \bigg| \frac{\partial^{\alpha + \beta}}
        {\partial x^\alpha \partial y^\beta} \kappa(x,y)\bigg| 
        \leq c_{\alpha,\beta}\frac{(\alpha + \beta)!}{|x-y|^{1 + 2q + \alpha + \beta}},
    \end{equation}
    for some constant $c_{\alpha,\beta} > 0$.
\end{definition}

We prove first the following fundamental result on the decay 
of matrix coefficients where the underlying interpolets 
are sufficiently far away from each other. The result is 
very similar to \cite[Lemma 5.3]{harbrecht_samplets_2022} 
and essential for the matrix compression.

\begin{lemma} \label{lem: first_compr}
    Let $(j,k)$ and $(j',k')$ be two sets of indices in the wavelet expansion. 
    such that the convex hulls $\Omega_{j,k}$ and $\Omega_{j',k'}$ of 
    the corresponding primal wavelets satisfy $\dist(\Omega_{j,k}, 
    \Omega_{j',k'}) > 0$.  
    Moreover, suppose 
    that the data points are uniformly distributed.
    Then, the corresponding matrix entry satisfies the estimate:
    \begin{equation}\label{eq:decay1}
        |\langle \psi_{{j,k}}, \mathcal{K} \psi_{{j',k'}} \rangle| 
        \lesssim 2^{-(\widetilde{d} + \frac 1 2)(j + j')}
        \dist(\Omega_{j,k},\Omega_{j',k'})^{-(1 + 2q + 2\widetilde{d})}.
    \end{equation}
\end{lemma}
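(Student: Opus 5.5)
The estimate follows by applying the cancellation property \eqref{eq:cancellation} twice, once in each variable, and then using the standard kernel bound \eqref{eq: as smooth} on the mixed derivative. The matrix entry is the discrete double sum
\[
\langle \psi_{j,k}, \mathcal K\psi_{j',k'}\rangle = \sum_{x\in\Omega'_{j,k}}\sum_{y\in\Omega'_{j',k'}}\omega_{j,k}(x)\,\omega_{j',k'}(y)\,\kappa(x,y).
\]
It can therefore be read as $\langle \psi_{j,k}, v\rangle$ with
\[
v(x) := \sum_{y}\omega_{j',k'}(y)\,\kappa(x,y) = \langle \kappa(x,\cdot),\psi_{j',k'}\rangle.
\]
Since $\dist(\Omega_{j,k},\Omega_{j',k'})>0$, the kernel is smooth (indeed analytic-type) on $\Omega_{j,k}\times\Omega_{j',k'}$. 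Hence $v$ is $\widetilde d$ times differentiable on $\Omega_{j,k}$, and \eqref{eq:cancellation} yields
\[
|\langle\psi_{j,k},\mathcal K\psi_{j',k'}\rangle|\lesssim 2^{-j(\widetilde d+\frac12)}\sup_{x\in\Omega_{j,k}}|\partial_x^{\widetilde d} v(x)|.
\]

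Next, the derivative commutes with the finite sum, so
\[
\partial_x^{\widetilde d}v(x)=\langle \partial_x^{\widetilde d}\kappa(x,\cdot),\psi_{j',k'}\rangle.
\]
For each fixed $x\in\Omega_{j,k}$, the function $y\mapsto\partial_x^{\widetilde d}\kappa(x,y)$ is smooth on $\Omega_{j',k'}$. Applying \eqref{eq:cancellation} again, now in $y$ at level $j'$, gives
\[
|\partial_x^{\widetilde d}v(x)|\lesssim 2^{-j'(\widetilde d+\frac12)}\sup_{y\in\Omega_{j',k'}}|\partial_x^{\widetilde d}\partial_y^{\widetilde d}\kappa(x,y)|.
\]
By \eqref{eq: as smooth} with $\alpha=\beta=\widetilde d$, this last supremum is bounded by
\[
c\,(2\widetilde d)!\,|x-y|^{-(1+2q+2\widetilde d)}\le c\,\dist(\Omega_{j,k},\Omega_{j',k'})^{-(1+2q+2\widetilde d)}.
\]
The inequality holds because $x\in\Omega_{j,k}$ and $y\in\Omega_{j',k'}$ force $|x-y|\ge\dist(\Omega_{j,k},\Omega_{j',k'})$, and the exponent $1+2q+2\widetilde d$ is positive once $\widetilde d$ is large enough relative to $|q|$. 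Combining the two scale factors produces exactly $2^{-(\widetilde d+\frac12)(j+j')}$, which proves \eqref{eq:decay1}.

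The main point requiring care is the validity of \eqref{eq:cancellation} with constants independent of $j$ and $k$. This needs the weights $\omega_{j,k}$ to be uniformly bounded by $2^{-j/2}$ times a fixed constant and the diameter of $\Omega_{j,k}$ to be $\sim 2^{-j}$. Both follow from the fixed two-scale relation, including the finitely many boundary-adapted stencils, together with the assumed uniform distribution of the data points. A short justification of \eqref{eq:cancellation} itself proceeds by Taylor expansion of $v$ about a point of $\Omega_{j,k}$ to order $\widetilde d-1$. The polynomial part is annihilated by \eqref{eq:vanishing moments}. The remainder is bounded by $\operatorname{diam}(\Omega_{j,k})^{\widetilde d}\,|v|_{W^{\widetilde d,\infty}(\Omega_{j,k})}\sum|\omega_{j,k}|\lesssim 2^{-j\widetilde d}\,2^{-j/2}\,|v|_{W^{\widetilde d,\infty}(\Omega_{j,k})}$. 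No regularity is lost, since everything is pointwise evaluation of smooth functions on disjoint sets.
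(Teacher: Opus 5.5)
Your proof is correct and follows essentially the same route as the paper. The paper Taylor-expands $\kappa$ in $x$ about the center of $\Omega_{j,k}$ and kills the polynomial part with the vanishing moments, then expands the remainder in $y$ about the center of $\Omega_{j',k'}$ and bounds the double remainder via \eqref{eq: as smooth} with $\alpha=\beta=\widetilde d$; your twofold application of \eqref{eq:cancellation} packages exactly this, and is slightly cleaner because the supremum of the mixed derivative appears directly instead of being pulled out of the integral remainder.
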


\begin{proof}
    Let $x_{j,k}$ be the center of $\Omega_{j,k}$. We expand 
    $\kappa(x,y)$ in a Taylor series in $x$ at $x_{j,k}$ 
    up to order $\widetilde{d}-1$ and arrive at
    \[
        \kappa(x,y) = \underbrace{\sum_{\alpha<\widetilde{d}} \frac{(x-x_{j,k})^\alpha}{\alpha!} \frac{\partial^{\alpha}}{\partial x^\alpha} \mathcal{K}(x_{j,k}, y)}_{=:P_y(x)} + R_{x_{j,k}}(x,y),
    \]
    where the remainder $R_{x_{j,k}}(x,y)$ is given by
    \begin{equation}\label{eq:remainder}
        R_{x_{j,k}}(x,y) = \frac{(x - x_{j,k})^{\widetilde{d}}}{(\widetilde{d}-1)!} 
        \int^1_0 \frac{\partial^{\widetilde{d}}}{\partial x^{\widetilde{d}}}\,
        \kappa\big(x_{j,k} + s(x - x_{j,k}),y\big)(1-s)^{\widetilde{d}-1} \d s.
    \end{equation}
    When we apply the functional $\psi_{j,k}$ to this expansion, 
    the polynomial part $P_y(x)$ vanishes due to the discrete 
    vanishing moment condition \eqref{eq:vanishing moments} of 
    $\psi_{j,k}$. The entry is thus reduced to the interaction 
    with the remainder \eqref{eq:remainder}, that is
    \begin{equation}\label{eq:remainder1}
        \langle \psi_{j,k}, \mathcal{K} \psi_{j',k'} \rangle 
        = \langle \psi_{j,k}, \langle \psi_{j',k'}, R_{x_{j,k}}(\cdot, \cdot) \rangle_y \rangle_x.
    \end{equation}
    
    We next expand the remainder \eqref{eq:remainder} with 
    respect to $y$ around the center point $y_{j',k'}$ of 
    $\Omega_{j',k'}$ and arrive at
    \[
        R_{x_{j,k}}(x,y) = P_x(y) + R_{x_{j,k},y_{j',k'}}(x,y)
    \]
    with the polynomial part
    \begin{align*}
        P_x(y) &= \frac{(x - x_{j,k})^{\widetilde{d}}}{(\widetilde{d} - 1)!} 
        \sum_{\beta<\widetilde{d}} \frac{(y - y_{j',k'})^\beta}{\beta!}\\
        &\qquad \times\int_0^1\frac{\partial^{\widetilde{d}}}{\partial x^{\widetilde{d}}}
        \frac{\partial^{\beta}}{\partial y^\beta} \kappa\big(x_{j,k} + s(x - x_{j,k}),
        y_{j',k'}\big)(1-s)^{\widetilde{d}-1} \d s
    \end{align*}
    and the remainder $R_{x_{j,k},y_{j',k'}}(x,y)$ given by
    \begin{align*}
        &R_{x_{j,k},y_{j',k'}}(x,y) = \frac{(x - x_{j,k})^{\widetilde{d}}}
        {(\widetilde{d} - 1)!} \frac{(y - y_{j',k'})^{\widetilde{d}}}{(\widetilde{d} - 1)!} \nonumber \\ 
        &\quad \times \int_0^1\int_0^1\frac{\partial^{2\widetilde{d}}}
        {\partial x^{\widetilde{d}}\partial y^{\widetilde{d}}} 
        \kappa\big(x_{j,k} + s(x - x_{j,k}),y_{j',k'} + t(y - y_{j',k'})\big)
        (1-s)^{\widetilde{d}-1}(1-t)^{\widetilde{d}-1} \d s \d t.
    \end{align*}
    For the same reasoning as for \eqref{eq:remainder1}, we derive
    the identity
    \[
        \langle \psi_{j,k}, \mathcal{K} \psi_{j',k'} \rangle 
        = \langle \psi_{j,k},  R_{x_{j,k},y_{j',k'}} \psi_{j',k'} \rangle.
    \]
    Thus, using \eqref{eq: as smooth}, we further conclude
    \[
        |\langle \psi_{j,k},  R_{x_{j,k},y_{j',k'}} \psi_{j',k'} \rangle |
        \lesssim 
        \frac{\big|\langle(\cdot - x_{j,k})^{\widetilde{d}},\psi_{j,k}\rangle 
        \langle(\cdot - y_{j',k'})^{\widetilde{d}}, \psi_{j',k'}\rangle\big|}
        {\dist (\Omega_{j,k}, \Omega_{j',k'})^{1 + 2q +2\widetilde{d}}}.
    \]
    In view of the vanishing moments property \eqref{eq:cancellation},
    we estimate
    \[
        \big|\langle(\cdot - x_{j,k})^{\widetilde{d}},\psi_{j,k}\rangle\big|
        \lesssim 2^{-j(\widetilde{d} + \frac 1 2)}
    \]
    and likewise 
    \[
        \big|\langle(\cdot - y_{j',k'})^{\widetilde{d}},\psi_{j',k'}\rangle\big|
        \lesssim 2^{-j'(\widetilde{d} + \frac 1 2)}. 
    \]
    This finally implies the desired decay estimate \eqref{eq:decay1}.
\end{proof}

This decay estimate justifies the truncation of the matrix 
$\mathbf K^{\psi}_J$ by dropping entries where the supports 
are sufficiently far apart. This is known to lead to 
$\mathcal{O}(N \log N)$ coefficients when we compress the 
matrix based on the previous theorem. Nonetheless, we can 
actually improve the result by applying a second compression, 
the proof of which turns out to be much simpler than in 
the case studied in \cite{dahmen_compression_2006}.

As a matter of fact, having two different levels 
$j' > j \geq j_0$, we can consider just the support instead 
of the convex hull of the coarsest level, since in our case 
the support is defined by the grid points related to 
$\phi_{j,k}$. It is then straightforward to prove the following:
\begin{theorem} \label{th: second_compr}
    Suppose that $j' > j \geq j_0$. 
    Then, the coefficients $\langle\psi_{j,k}, \mathcal{K}\psi_{j',k'}\rangle$ 
    and $\langle\psi_{j',k'}, \mathcal{K}\psi_{j,k}\rangle$ satisfy 
    \begin{equation}\label{eq:decay2}
        |\langle\psi_{j,k}, \mathcal{K}\psi_{j',k'}\rangle,
        \langle\psi_{j',k'}, \mathcal{K}\psi_{j,k}\rangle| 
        \lesssim {2^{-\frac j 2}}2^{-j'(\widetilde{d} + \frac 1 2)}
        \dist(\Omega'_{j,k},\Omega_{j',k'})^{-(1 + 2q +\widetilde{d})}
    \end{equation}
    uniformly with respect to j, provided that
    \begin{equation}\label{eq: dist}
        \dist(\Omega'_{j,k},\Omega_{j',k'}) \gtrsim 2^{-j'}.
    \end{equation}
\end{theorem}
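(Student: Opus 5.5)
The plan is to exploit the fact that the coarse primal wavelet $\psi_{j,k}$ is a finite combination of Dirac distributions. Concretely,
\[
\psi_{j,k}=2^{-\frac j2}\sum_{x\in\Omega'_{j,k}}\omega_{j,k}(x)\,\delta_x ,
\]
where the number of points and the weights $p_{k,\ell}$ are bounded independently of $j$. Since the kernel is symmetric, $\langle\psi_{j',k'},\mathcal K\psi_{j,k}\rangle=\langle\psi_{j,k},\mathcal K\psi_{j',k'}\rangle$, so it suffices to treat one of the two coefficients. I would write
\[
\langle\psi_{j,k},\mathcal K\psi_{j',k'}\rangle=2^{-\frac j2}\sum_{x\in\Omega'_{j,k}}\omega_{j,k}(x)\,\langle \kappa(x,\cdot),\psi_{j',k'}\rangle .
\]
This reduces the problem to bounding the action of the fine wavelet $\psi_{j',k'}$ on the functions $y\mapsto\kappa(x,y)$, one for each of the finitely many points $x\in\Omega'_{j,k}$. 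No cancellation is required on the coarse side; the coarse wavelet contributes only the normalisation factor $2^{-j/2}$.

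For each fixed $x\in\Omega'_{j,k}$, I would apply the argument from the second half of the proof of Lemma~\ref{lem: first_compr}, but only in the variable $y$. First I expand $\kappa(x,\cdot)$ in a Taylor series about the centre $y_{j',k'}$ of $\Omega_{j',k'}$ up to order $\widetilde d-1$. The vanishing moments \eqref{eq:vanishing moments} remove the polynomial part, leaving only the remainder. Equivalently, I apply the cancellation estimate \eqref{eq:cancellation} to $v=\kappa(x,\cdot)$ on $\Omega_{j',k'}$, which gives
\[
|\langle \kappa(x,\cdot),\psi_{j',k'}\rangle|\lesssim 2^{-j'(\widetilde d+\frac12)}\sup_{y\in\Omega_{j',k'}}|\partial_y^{\widetilde d}\kappa(x,y)|.
\]
The standard kernel estimate \eqref{eq: as smooth} with $\alpha=0$, $\beta=\widetilde d$ then bounds the supremum by $|x-y|^{-(1+2q+\widetilde d)}\le \dist(\Omega'_{j,k},\Omega_{j',k'})^{-(1+2q+\widetilde d)}$. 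Summing over the uniformly bounded number of points in $\Omega'_{j,k}$ with bounded weights yields \eqref{eq:decay2}, with constants independent of $j$ and $j'$.

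The main obstacle is justifying that the kernel is smooth enough on $\Omega_{j',k'}$ to apply the Taylor argument. The convex hull $\Omega_{j',k'}$ must not contain any point $x\in\Omega'_{j,k}$, since the kernel is singular or of limited smoothness on the diagonal. This is precisely the role of the hypothesis \eqref{eq: dist}. Because $\dist(\Omega'_{j,k},\Omega_{j',k'})\gtrsim 2^{-j'}>0$, each $x$ lies outside $\Omega_{j',k'}$, so $y\mapsto\kappa(x,y)$ is $C^{\widetilde d}$ there. In addition, $|x-y|$ is bounded below by that distance uniformly over the segment used in the integral form of the remainder. The point worth stressing is that only the finite point set $\Omega'_{j,k}$ enters, not its convex hull. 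This is what allows the fine wavelet to sit between the grid points of the coarse wavelet, with the constraint coming only from the scale $2^{-j'}$. It is also why uniformity in $j$ follows directly from the level-independent two-scale weights.
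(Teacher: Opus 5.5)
Your proposal is correct and follows the paper's argument: you write the coarse wavelet as a sum of Diracs with $\|\omega_{j,k}\|_1\lesssim 2^{-j/2}$, use cancellation only on the fine side through a Taylor expansion in $y$ over the convex hull $\Omega_{j',k'}$, and apply \eqref{eq: as smooth} with $\alpha=0$, $\beta=\widetilde d$. The only difference is that the paper splits off the case $\Omega_{j,k}\cap\Omega_{j',k'}=\varnothing$ and defers it to Lemma~\ref{lem: first_compr}, while you use the one-sided argument in both cases. Your version is actually preferable, because the bound of Lemma~\ref{lem: first_compr} implies \eqref{eq:decay2} only when $\dist(\Omega_{j,k},\Omega_{j',k'})\gtrsim 2^{-j}$, not merely when it is $\gtrsim 2^{-j'}$.
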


\begin{proof}
    There are two cases.

    First, suppose that $\Omega_{j,k} \cap \Omega_{j',k'} = \varnothing$. 
    This is the case studied already in Lemma~\ref{lem: first_compr}, 
    so it does not need further discussion.

    Second, consider the case $\Omega_{j,k} \cap \Omega_{j',k'} \not= \varnothing$, 
    which also means $\Omega_{j',k'} \subset \Omega_{j,k}$ in view of
    \eqref{eq: dist}. Recall that
    \begin{align*}
        \langle \mathcal{K}\psi_{j',k'},\psi_{j,k}\rangle 
        &= \sum_{x\in \Omega'_{j,k},\,y\in \Omega'_{j',k'}} 
        \omega_{j,k}(x) \omega_{j',k'}(y) \kappa(x,y)\\
        &= \sum_{x \in \Omega'_{j,k}} 
        \omega_{j,k}(x) \langle \kappa(x,\cdot),\psi_{j',k'}\rangle.
    \end{align*}
    Thus, by using the Taylor expansion and by vanishing moments 
    we have, defining the remainder $R_{y_{j',k'}}(x,y)$ analogously
    to \eqref{eq:remainder},
    \begin{align*}
        \Bigg|\sum_{x\in\Omega'_{j,k}} \omega_{j,k}(x) &\langle \mathcal{K}(x,\cdot),\psi_{j',k'}\rangle\Bigg|
        \le\sum_{x \in \Omega'_{j,k}}|\omega_{j,k}(x)|\big|\langle R_{y_{j',k'}}(x,\cdot),\psi_{j',k'}\rangle\big|\\
        & \lesssim \sum_{x \in \Omega'_{j,k}}|\omega_{j,k}(x)| \ \ 2^{-j'(\widetilde{d} + \frac 1 2)} 
        \sup_{x \in \Omega'_{j,k}} \big\{\dist(x,\Omega_{j',k'})^{-(1 + 2q + \widetilde{d})} \big\}.
    \end{align*}
    Moreover, we have 
    \[
        \sup_{x \in \Omega'_{j,k}} \big\{\dist(x,\Omega_{j',k'})^{-(1 + 2q + \widetilde{d})}\big\} 
        \lesssim \dist(\Omega'_{j,k}, \Omega_{j',k'})^{-(1 + 2q + \widetilde{d})}.
    \]
    Finally, since the points are uniformly chosen, the weights $\omega_{j,k}(x)$ 
    depend only on the dual wavelets. They are the interpolets' coefficients 
    scaled by a factor $2^{-\frac j 2}$, thus $\|\omega_{j,k}\|_1 \leq 
    c_{\widetilde{d}} 2^{-\frac j 2}$, where $c_{\widetilde{d}}$ depends 
    on whether the weights are related to a boundary or internal function. 
    By this last observation, we derive the desired estimate \eqref{eq:decay2}.
\end{proof}
\section{Kernel matrix compression} \label{sec: compr}
The discretization of the operator $\mathcal{K}: H^q(\Omega)
\to H^{-q}(\Omega)$ by wavelets with vanishing moments leads 
to quasi-sparse kernel matrices. If their cancellation property 
is strong enough relative to the underlying order of the operator 
$\mathcal{K}$, it is possible to compress the kernel matrices 
such that only $\mathcal{O}(N)$ matrix coefficients remain. 
To this end, we consider the level dependent compression 
stategy from \cite{dahmen_compression_2006} to cut off the 
entries related to supports distant enough with respect to the 
Lemma~\ref{lem: first_compr} and Theorem~\ref{th: second_compr}. 
We can thus state the following, noticing that the bandwidth 
parameters $a, a'>0$ do not depend on the levels $j,j'$
as in \cite{dahmen_compression_2006}.

\begin{theorem}
    Let $\Omega_{j,k}$ and $\Omega'_{j,k}$ be given as in 
    \eqref{def:supports} and consider the compressed kernel 
    matrix $\tilde{\mathbf K}^{\psi}_J$ in accordance with
    \begin{equation}\label{eq:a-priori compression A}
        [\widetilde{\mathbf K}^{\psi}_J]_{(j,k),(j',k')} := 
        \begin{cases} 
            0, & \operatorname{dist}(\Omega_{j,k}, \Omega_{j',k'}) > B_{j,j'} \text{ and } j, j' > j_0, \\
            0, & \operatorname{dist}(\Omega_{j,k}, \Omega_{j',k'}) \lesssim 2^{-\min\{j,j'\}} \text{ and} \\
            & \operatorname{dist}(\Omega'_{j,k}, \Omega_{j',k'}) > B'_{j,j'} \text{ if } j' > j \geq j_0, \\
            & \operatorname{dist}(\Omega_{j,k}, \Omega'_{j',k'}) > B'_{j,j'} \text{ if } j > j' \geq j_0, \\
            \langle \mathcal K\psi_{j,k}, \psi_{j',k'} \rangle, & \text{otherwise}.
        \end{cases}
    \end{equation}
    Herein, fixing the bandwidth parameters $a,a' > 1$ and 
    $0 < d' < \widetilde{d} + 2q$, the cut off parameters 
    are defined as follows:
    \begin{equation}\label{eq:a-priori compression B}
        \begin{aligned}
            B_{j,j'} &= a \ \max\left\{ 2^{-\min\{j,j'\}},2^{\frac{2J(d' - q) - (j + j')(d' + \widetilde{d})}{2(\widetilde{d} + q)}}\right \},\\
            B'_{j,j'} &= a' \ \max\left\{ 2^{-\max\{j,j'\}},2^{\frac{2J(d' - q) - (j + j')d' - max\{j,j'\} \widetilde{d}}{\widetilde{d} + 2q}}\right \}.
        \end{aligned}
    \end{equation} 
    Then, the number of non-zero coefficients of $\widetilde{\mathbf K}^{\psi}_J$ 
    is bounded by $\mathcal{O}(N)$, where $N$ denotes the number of grid points 
    in $\Omega$. 
\end{theorem}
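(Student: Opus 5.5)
The count is block-wise: I bound the number of retained entries in each block $(j,j')$ and then sum over $j_0\le j,j'<J$. This is the counting argument of \cite{dahmen_compression_2006}, simplified by the uniform grid. Here $\#\nabla_j\sim 2^j$, $|\Omega_{j,k}|\sim 2^{-j}$, and $\Omega'_{j,k}$ consists of a uniformly bounded number of points, independent of $j$ and $k$. By symmetry I assume $j\le j'$ throughout.

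\textbf{First compression.} For a fixed $\psi_{j,k}$, the indices $k'\in\nabla_{j'}$ with $\dist(\Omega_{j,k},\Omega_{j',k'})\le B_{j,j'}$ all lie in an interval of length $\lesssim 2^{-j}+B_{j,j'}$, so there are $\lesssim 2^{j'}(2^{-j}+B_{j,j'})$ of them. Summing over $k\in\nabla_j$, the block holds $\lesssim 2^{j'}+2^{j+j'}B_{j,j'}$ entries.

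The first term is harmless: summing $2^{j'}$ over $j\le j'<J$ gives $\sum_{j'} j'2^{j'}$, which is $O(N\log N)$. This is precisely why the first compression alone yields $N\log N$. For the second term I insert the second argument of the maximum, which makes it a power of two with exponent
\[
\frac{2J(d'-q)+(j+j')(\widetilde d+2q-d')}{2(\widetilde d+q)}.
\]
Since $d'<\widetilde d+2q$, the coefficient of $j+j'$ is positive, so the double geometric sum is dominated by $j=j'=J$. There the exponent equals $J$, so this contribution is $O(2^J)=O(N)$.

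\textbf{Second compression.} The $2^{j'}$-type term comes from entries with $\dist(\Omega_{j,k},\Omega_{j',k'})\lesssim 2^{-j}$, i.e.\ a fine wavelet near a coarse one, and the second rule prunes exactly these. For fixed $k$, a retained $k'$ must satisfy $\dist(\Omega'_{j,k},\Omega_{j',k'})\le B'_{j,j'}$. Because $\Omega'_{j,k}$ is a bounded set of points, these $k'$ lie in $O(1)$ intervals of length $\lesssim 2^{-j'}+B'_{j,j'}$, giving $\lesssim 1+2^{j'}B'_{j,j'}$ indices. Summing over $k$, the block holds $\lesssim 2^j+2^{j+j'}B'_{j,j'}$ entries.

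The term $2^j$ sums over $j\le j'$ to $\sum_{j'}2^{j'}\lesssim 2^J$. For the other term I insert the second argument of $B'_{j,j'}$, with $\max\{j,j'\}=j'$, so the exponent is
\[
\frac{2J(d'-q)+j(\widetilde d+2q-d')+j'(2q-d')}{\widetilde d+2q}.
\]
I sum first over $j\le j'$, where the sum is geometric with dominant term $j=j'$. This leaves exponent $\bigl(2J(d'-q)+j'(\widetilde d+4q-2d')\bigr)/(\widetilde d+2q)$. If the coefficient of $j'$ is positive, the $j'$-sum is dominated by $j'=J$, which gives $J$ and hence $O(N)$. Otherwise it is dominated by $j'=j_0$, where the value $2^{2J(d'-q)/(\widetilde d+2q)}$ is at most $2^J$ because $2(d'-q)<\widetilde d+2q-2q\cdot$\,const. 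In either case only the boundary case gives $J\,2^J$, so the parameter range must exclude it.

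The one place I expect real difficulty is this exponent bookkeeping for $B'_{j,j'}$. The leading terms cancel to $2^J$ only for the stated choice of $d'$, and in the borderline case an extra factor $J$ could appear. If it does, I would absorb it by using the strict inequality $d'<\widetilde d+2q$, which leaves a positive gap in the exponent. I also check that the finitely many coarse-level blocks with $j$ or $j'=j_0$, which the first rule does not compress, contribute only $O(2^J)$ entries.

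Adding the two compressions gives $\mathcal O(2^J)=\mathcal O(N)$ nonzero coefficients in total.
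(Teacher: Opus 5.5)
Your first-compression count is correct, but the second-compression summation fails at exactly the step you flagged. In the case $\widetilde d+4q-2d'<0$ you need $2^{2J(d'-q)/(\widetilde d+2q)}\lesssim 2^J$, i.e.\ $2(d'-q)\le\widetilde d+2q$. That inequality is equivalent to $2d'\le\widetilde d+4q$, which is the negation of the case hypothesis. So the $j'=j_0$ term really is superlinear. Concretely, your bound for the block $(j_0,j_0+1)$ is about $2^{2J(d'-q)/(\widetilde d+2q)}$, although that block has only $\mathcal O(1)$ entries.

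This is not an exotic case. In the paper's own experiment, $\widetilde d=4$ and $q=-\frac{19}{16}$ give $\widetilde d+4q<0$. Every admissible $d'>0$ therefore lands in this case, and your estimate grows at least like $2^{19J/13}$. The borderline $2d'=\widetilde d+4q$ is admissible whenever $\widetilde d+4q>0$ and produces $J2^J$ in your bookkeeping. The strict inequality $d'<\widetilde d+2q$ concerns a different quantity and cannot remove it.

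The missing ingredient is that the second rule acts only on the near field. A block $(j,j')$ with $j<j'$ therefore retains at most $\min\{2^{j'},\,2^j+2^{j+j'}B'_{j,j'}\}$ near-field entries. You must use the cap $2^{j'}$ wherever the second argument of $B'_{j,j'}$ exceeds $2^{-j}$. Set $\delta:=\widetilde d+2q-d'>0$. One checks that this happens if and only if $\delta(j'-j)<2(d'-q)(J-j')$, which is precisely the condition under which the second argument of $B_{j,j'}$ exceeds $2^{-j}$.

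\emph{Blocks in this regime.} Here $2^{j'}$ is dominated by $2^{j+j'}$ times the second argument of $B_{j,j'}$. Your first-compression sum already bounds that by $\mathcal O(2^J)$.

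\emph{Remaining blocks.} These are the blocks with $j\le j^\ast:=j'-\frac{2(d'-q)}{\delta}(J-j')$. Your exponent $\bigl(2J(d'-q)+j\delta+j'(2q-d')\bigr)/(\widetilde d+2q)$ increases in $j$ and equals exactly $j'$ at $j=j^\ast$. Hence the $j$-sum is $\lesssim 2^{j'}$ and the $j'$-sum is $\lesssim 2^J$. This needs no case distinction on the sign of $\widetilde d+4q-2d'$ and has no logarithmic borderline.

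The same pairing of regimes is what lets you drop the first argument of $B_{j,j'}$ in your first-compression count. It tacitly requires the near-field threshold hidden in $\lesssim 2^{-\min\{j,j'\}}$ to be at least $a\,2^{-\min\{j,j'\}}$. The paper gives no details here and simply defers to \cite[Theorem 11.1]{dahmen_compression_2006}.
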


\begin{proof}
The proof follows line by line as the proof of
\cite[Theorem $11.1$]{dahmen_compression_2006}.
\end{proof}

After the two a-priori compressions, it is also possible 
to apply the a-posteriori compression to further reduce
the number of non-zero coefficients of the kernel matrix.
Namely, we set
\begin{equation}\label{eq:a-posteriori compression A}
    [\mathbf K^{\psi,\epsilon}_{J}]_{(j,k),(j',k')} := 
    \begin{cases} 
        0, & \text{if}\ |[\widetilde{\mathbf K}^{\psi}_J]_{(j,k),(j',k')}| \leq \epsilon_{j,j'},\\
        [\widetilde{\mathbf K}^{\psi}_J]_{(j,k),(j',k')}, 
        & \text{if}\ |[\widetilde{\mathbf K}^{\psi}_J]_{(j,k),(j',k')}| > \epsilon_{j,j'},
\end{cases}
\end{equation}
by using the level-dependent threshold $\epsilon_{j,j'}$, 
which, for some $a''>0$, is set as
\begin{equation}\label{eq:a-posteriori compression B}
    \epsilon_{j,j'} = a'' \min \bigg\{2^{-\frac{|j - j'|}{2}}, 
    2^{-\big(J - \frac{j + j'}{2} \big ) \frac{d' -q}{\tilde d + q}} \bigg \}
    2^{2Jq}2^{-2d' \big (J - \frac{j + j'}{2} \big )}.
\end{equation}

To bound the over-all compression error, we adopt the methodology 
introduced in \cite{dahmen_compression_2006} and split for given 
levels $j$ and $j'$ the associated block compression error into 
three parts in accordance with
\begin{align*}
   \Bigg|\sum_{k\in \nabla_j}\sum_{k'\in \nabla_{j'}}
   \langle(\mathcal{K} - \mathcal{K}_J^\epsilon)\psi_{j',k'},\psi_{j,k}\rangle\Bigg| 
   \leq\|\mathbf R_{j,j'}\|_{2} +\|\mathbf T_{j,j'}\|_{2} + \|\mathbf S_{j,j'}\|_{2}.
\end{align*}
Here, $\mathcal{K}_J^\epsilon$ is such that $\langle\mathcal{K}_J^\epsilon\psi_{j',k'},
\psi_{j,k}\rangle = [\mathbf K^{\psi,\epsilon}_{J}]_{(j,k),(j',k')}$, and 
$\mathbf R_{j,j'}, \mathbf T_{j,j'}$, and $\mathbf S_{j,j'}$ denote, respectively, 
the difference between the matrix representation of the operator prior to and 
subsequent to the first compression, the second compression, and the a-posteriori 
compression step. In complete analogy with \cite{dahmen_compression_2006}, we 
then derive the following error estimate for the blockwise compression error.

\begin{lemma}\label{lem: matrix_bound}
    For given $j_0\le j,j'< J$, the blockwise compression 
    error is bounded by
    \begin{align*}
        \Bigg|\sum_{k\in \nabla_j}\sum_{k'\in \nabla_{j'}}
        \langle(\mathcal{K} - \mathcal{K}_J^\epsilon)\psi_{j',k'},\psi_{j,k}\rangle\Bigg| 
        \lesssim \epsilon 2^{2Jq}2^{-2d'\big (J - \frac{j + j'}{2} \big)},
    \end{align*}
    where
    \begin{equation}\label{eq:consistency}
        \epsilon := a^{-2(\widetilde{d}+q)} + (a')^{-(\widetilde{d}+2q)} + a''.
    \end{equation}
\end{lemma}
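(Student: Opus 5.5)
We bound the three contributions $\|\mathbf R_{j,j'}\|_2$, $\|\mathbf T_{j,j'}\|_2$, $\|\mathbf S_{j,j'}\|_2$ separately, each by the corresponding summand of $\epsilon$ times $2^{2Jq}2^{-2d'(J-\frac{j+j'}{2})}$. For the matrix norms we use the Schur lemma with trivial weights: for a block $\mathbf M$ indexed by $k\in\nabla_j$, $k'\in\nabla_{j'}$,
\[
\|\mathbf M\|_2^2\le \Big(\max_{k}\sum_{k'}|M_{k,k'}|\Big)\Big(\max_{k'}\sum_{k}|M_{k,k'}|\Big).
\]
Since the grid is uniform, each $\Omega_{j,k}$ has diameter $\sim 2^{-j}$ and the wavelets of level $j'$ are $2^{-j'}$-spaced. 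Row and column sums of discarded entries can therefore be compared with integrals over the region where the discarded entries live.

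For $\mathbf R_{j,j'}$ (first compression), the discarded entries satisfy $\delta:=\dist(\Omega_{j,k},\Omega_{j',k'})>B_{j,j'}$, and Lemma~\ref{lem: first_compr} gives $|M_{k,k'}|\lesssim 2^{-(\widetilde d+\frac12)(j+j')}\delta^{-(1+2q+2\widetilde d)}$. Summing over $k'$ gives roughly $2^{j'}\int_{B_{j,j'}}^\infty \delta^{-(1+2q+2\widetilde d)}\d\delta\sim 2^{j'}B_{j,j'}^{-2(\widetilde d+q)}$; this uses $\widetilde d+q>0$, which follows from $d'<\widetilde d+2q$ together with $d'>0$ and $q<0$. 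The sum over $k$ is the same with $2^{j}$ in place of $2^{j'}$. The geometric mean of the two Schur factors is
\[
2^{-(\widetilde d+\frac12)(j+j')}2^{\frac{j+j'}{2}}B_{j,j'}^{-2(\widetilde d+q)}.
\]
Inserting the second term in the max of $B_{j,j'}$ from \eqref{eq:a-priori compression B} reduces this to $a^{-2(\widetilde d+q)}2^{-2J(d'-q)+(j+j')d'}$, which is exactly $a^{-2(\widetilde d+q)}2^{2Jq}2^{-2d'(J-\frac{j+j'}2)}$. If instead the first term in the max dominates, then the same expression is even smaller.

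For $\mathbf T_{j,j'}$ (second compression), take $j'>j$; the case $j>j'$ is symmetric. Theorem~\ref{th: second_compr} gives $|M_{k,k'}|\lesssim 2^{-\frac j2}2^{-j'(\widetilde d+\frac12)}\delta'^{-(1+2q+\widetilde d)}$ with $\delta'=\dist(\Omega'_{j,k},\Omega_{j',k'})>B'_{j,j'}$. The key point is that $\Omega'_{j,k}$ is a finite set of points, so for fixed $k$ the level-$j'$ wavelets at distance $\geq B'$ from it number at most $\mathcal O(1)$ clusters. This gives the row sum $\lesssim 2^{j'}\int_{B'}^{\infty}\delta^{-(1+2q+\widetilde d)}\d\delta\sim 2^{j'}B'^{-(2q+\widetilde d)}$. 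For the column sums, a fixed $k'$ meets only $\mathcal O(1)$ coarse indices $k$ whose hull overlaps $\Omega_{j',k'}$, because the hulls at level $j$ have bounded overlap; so the column sum is bounded by a single entry. Combining the two factors with the value of $B'_{j,j'}$ should produce $(a')^{-(\widetilde d+2q)}2^{2Jq}2^{-2d'(J-\frac{j+j'}2)}$. I expect this exponent bookkeeping to be the main obstacle: the asymmetric Schur weights, where one side is $\mathcal O(1)$ and the other is an integral, must balance exactly against the definition of $B'_{j,j'}$. If trivial weights fail, I would use Schur weights $2^{\pm (j'-j)/2}$.

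For $\mathbf S_{j,j'}$ (a-posteriori compression), every discarded entry is bounded by $\epsilon_{j,j'}$. The number of entries remaining after the a-priori steps is $\mathcal O(1+2^{j'}B_{j,j'})$ per row and analogously per column. By Schur, $\|\mathbf S_{j,j'}\|_2\lesssim\epsilon_{j,j'}\times$ (geometric mean of the row and column counts). The two terms in the min of \eqref{eq:a-posteriori compression B} are designed to cancel these counts: the term $2^{-|j-j'|/2}$ handles the regime of $\mathcal O(1)$-width bands, and the other term handles the $B_{j,j'}$-dominated regime. This yields $a''2^{2Jq}2^{-2d'(J-\frac{j+j'}2)}$. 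Adding the three bounds gives the lemma with $\epsilon$ as in \eqref{eq:consistency}.
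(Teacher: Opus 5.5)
Your bounds for $\mathbf R_{j,j'}$ and $\mathbf S_{j,j'}$ follow the Schur-lemma route of \cite[Theorems 8.1 and 8.3]{dahmen_compression_2006}, which is the argument the paper defers to, and your exponent bookkeeping there is correct. The gap is in $\mathbf T_{j,j'}$, and it does not close. Take $j'>j$. Your row sum is $2^{-j/2}2^{-j'(\widetilde d+\frac12)}2^{j'}(B'_{j,j'})^{-(\widetilde d+2q)}$ and your column sum is $2^{-j/2}2^{-j'(\widetilde d+\frac12)}(B'_{j,j'})^{-(1+2q+\widetilde d)}$. With these, the Schur lemma gives
\[
\|\mathbf T_{j,j'}\|_2\lesssim (a')^{-(\widetilde d+2q)}2^{2Jq}2^{-2d'(J-\frac{j+j'}{2})}\Big(\frac{2^{-j}}{B'_{j,j'}}\Big)^{1/2}.
\]
The last factor is not bounded: for $j'=J-1$ it grows like $2^{\frac{J-j}{2}(1-\frac{d'}{\widetilde d+2q})}$. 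Your fallback cannot help either. Weights that are constant on each level, such as $2^{\pm(j'-j)/2}$, cancel in the product of the two Schur constants and change nothing.

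This is not an artefact of the Schur test. Every column of $\mathbf T_{j,j'}$ has $\mathcal O(1)$ nonzeros, so $\|\mathbf T_{j,j'}\|_2$ is comparable to its largest row $\ell^2$-norm. In the relevant regime $B'_{j,j'}\ll 2^{-j}$, a row contains roughly $2^{j'}B'_{j,j'}$ discarded entries with $\dist(\Omega'_{j,k},\Omega_{j',k'})\in(B'_{j,j'},2B'_{j,j'})$. These alone give a row norm of exactly the size displayed above whenever the bound of Theorem~\ref{th: second_compr} is attained, as it is for Mat\'ern kernels with $\nu\notin\frac12+\mathbb N_0$.

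The root cause is a difference from \cite{dahmen_compression_2006}. There the coarse wavelet is a function, and the second-compression estimate reads $2^{j/2}2^{-j'(\widetilde d+\frac12)}\delta'^{-(2q+\widetilde d)}$. The same computation then produces the harmless factor $(2^jB'_{j,j'})^{1/2}\le 1$. For Dirac-type primal wavelets, Theorem~\ref{th: second_compr} is weaker by the factor $2^{-j}/\delta'$.

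Hence neither your argument nor the paper's appeal to \cite[Theorem 8.2]{dahmen_compression_2006} as ``identical'' yields the claimed bound for $\mathbf T_{j,j'}$ with $B'_{j,j'}$ as in \eqref{eq:a-priori compression B}. To close it, you would have to enlarge $B'_{j,j'}$ so that $2^{-j/2}2^{-j'\widetilde d}(B'_{j,j'})^{-(\widetilde d+2q+\frac12)}$ matches the target, and then redo the $\mathcal O(N)$ complexity count.
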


\begin{proof}
    The proof is omitted as it proceeds identically to the argument 
    presented in \cite[Theorems\ 8.1, 8.2, and 8.3]{dahmen_compression_2006},
    successively estimating the norms of the error matrices $\mathbf R_{j,j'}$,
    $\mathbf T_{j,j'}$, and $\mathbf S_{j,j'}$.
\end{proof}

Having established the bound on the block compression error, 
we can now prove the following theorem on the consistency of the proposed
compression scheme:

\begin{theorem}\label{th: consistency}
    Let $\mathbf K^{\psi,\epsilon}_J$ be the compressed matrix 
    defined by the a-priori compression \eqref{eq:a-priori compression A}
    and the a-posteriori compression \eqref{eq:a-posteriori compression A}
    with parameters $B_{j,j'},B'_{j,j'}$ from \eqref{eq:a-priori compression B}
    and $\epsilon_{j,j'}$ from \eqref{eq:a-posteriori compression B}. Moreover, 
    let $\widetilde{d} + 2q > 0$. Then, for all $q \leq t, t' \leq 0$ 
    and $u \in H^t(\Omega)$, $v \in H^{t'}(\Omega)$, the estimates
    \begin{equation}\label{eq:1st error bound}
        |\langle(\mathcal{K} - \mathcal{K}_J^\epsilon )P_Ju,P_Jv\rangle| 
        \lesssim \epsilon 2^{J(2q-t - t')} \|u\|_{H^t(\Omega)}\|v\|_{H^{t'}(\Omega)}
    \end{equation}
    and
    \begin{equation}\label{eq:2nd error bound}
        \|P_J^\star(\mathcal{K} - \mathcal{K}_J^\epsilon )P_Ju\|_{H^{-q}(\Omega)} 
        \lesssim \epsilon 2^{J(q-t)} \|u\|_{H^t(\Omega)},
    \end{equation}
    where $\epsilon$ is from (\ref{eq:consistency}), hold uniformly with 
    respect to $J$.
\end{theorem}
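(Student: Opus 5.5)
We follow the standard route of \cite[Theorem 8.4]{dahmen_compression_2006}: the blockwise bound of Lemma~\ref{lem: matrix_bound} is turned into a global estimate by expanding in the wavelet basis, using the norm equivalences \eqref{eq:norm equivalences}, and summing a geometric double series.

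First we expand $P_Ju=\sum_{j'}\sum_{k'}u_{j',k'}\psi_{j',k'}$ and $P_Jv=\sum_{j}\sum_{k}v_{j,k}\psi_{j,k}$. Here $u_{j',k'}=\langle u,\widetilde\psi_{j',k'}\rangle$ and $v_{j,k}=\langle v,\widetilde\psi_{j,k}\rangle$. This gives
\[
    \langle(\mathcal K-\mathcal K_J^\epsilon)P_Ju,P_Jv\rangle
    =\sum_{j,j'=j_0}^{J-1}\mathbf v_j^T\mathbf E_{j,j'}\mathbf u_{j'},
\]
where $\mathbf E_{j,j'}$ is the block of $\mathbf K^\psi_J-\mathbf K^{\psi,\epsilon}_J$ at levels $(j,j')$. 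We read Lemma~\ref{lem: matrix_bound} as a bound on the spectral norm of this block: $\|\mathbf E_{j,j'}\|_2\le\|\mathbf R_{j,j'}\|_2+\|\mathbf T_{j,j'}\|_2+\|\mathbf S_{j,j'}\|_2\lesssim\epsilon2^{2Jq}2^{-2d'(J-\frac{j+j'}{2})}$. Its proof goes through these three norms, so this reading is what the lemma actually delivers. Hence
\[
    |\mathbf v_j^T\mathbf E_{j,j'}\mathbf u_{j'}|\lesssim\epsilon2^{2Jq}2^{-d'(J-j)}2^{-d'(J-j')}\|\mathbf u_{j'}\|_2\|\mathbf v_j\|_2 .
\]

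Next we introduce the level norms. We insert $\|\mathbf u_{j'}\|_2=2^{-j't}\big(2^{2j't}\|\mathbf u_{j'}\|_2^2\big)^{1/2}$, and the same for $\mathbf v_j$ with $t'$. The right-hand side then factors into a product of two terms of the form
\[
    2^{Jq}2^{-d'(J-j')}2^{-j't}\,a_{j'}, \qquad a_{j'}:=2^{j't}\|\mathbf u_{j'}\|_2 .
\]
Writing $2^{-j't}=2^{-Jt}2^{t(J-j')}$, each factor becomes $2^{J(q-t)}2^{-(d'-t)(J-j')}a_{j'}$. Since $t\le0<d'$, the exponent $d'-t$ is positive, so $\sum_{j'}2^{-(d'-t)(J-j')}$ is a convergent geometric series, bounded independently of $J$. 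By Cauchy--Schwarz, $\sum_{j'}2^{-(d'-t)(J-j')}a_{j'}\lesssim(\sum_{j'}a_{j'}^2)^{1/2}$. The double sum over $(j,j')$ factorizes, which gives the bound $\epsilon2^{J(2q-t-t')}(\sum a_{j'}^2)^{1/2}(\sum b_j^2)^{1/2}$ with $b_j:=2^{jt'}\|\mathbf v_j\|_2$.

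The last step for \eqref{eq:1st error bound} is $(\sum_{j'}a_{j'}^2)^{1/2}\sim\|P_Ju\|_{H^t}\lesssim\|u\|_{H^t}$. This uses the first norm equivalence in \eqref{eq:norm equivalences} for $t\in(-\widetilde\gamma,\gamma)=(-\widetilde\gamma,-\frac12)$, together with $H^t$-stability of $P_J$. We expect this to be the main obstacle. The stated range $q\le t\le0$ is not obviously contained in $(-\widetilde\gamma,-\frac12)$, because the primal Dirac functionals only live in $H^{-s}$ for $s>\frac12$. We would handle it by working with the discrete coefficient norms $(\sum_j2^{2jt}\|\mathbf u_j\|^2)^{1/2}$ throughout and identifying them with $\|u\|_{H^t}$ via the dual (interpolet) expansion. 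Equivalently, this is the second equivalence in \eqref{eq:norm equivalences} applied by duality, with $\widetilde\gamma>-q$ covering $t\ge q$.

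Finally, \eqref{eq:2nd error bound} follows by duality. We use
\[
    \|P_J^\star w\|_{H^{-q}}\sim\sup_{v}\frac{\langle w,P_Jv\rangle}{\|v\|_{H^{q}}},
\]
which holds by the norm equivalences. Applying \eqref{eq:1st error bound} with $t'=q$ gives $\epsilon2^{J(2q-t-q)}=\epsilon2^{J(q-t)}$.
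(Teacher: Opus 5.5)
\textbf{Assessment.} Your overall structure matches the paper's proof: the block expansion, Lemma~\ref{lem: matrix_bound} read as a spectral-norm bound on each level block, a geometric series driven by $d'-t>0$, and duality with $t'=q$ for \eqref{eq:2nd error bound}. There is, however, a genuine gap in how you control the coefficient vectors.

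\textbf{Where it fails.} The problem is the Cauchy--Schwarz step over $j'$. It forces you to prove the square-summed bound $\sum_{j'}2^{2j't}\|\mathbf u_{j'}\|_2^2\lesssim\|u\|_{H^t(\Omega)}^2$, which is not available on the whole range $q\le t\le 0$.
\begin{itemize}
\item The equivalences \eqref{eq:norm equivalences} give this bound only for $t\in(-\widetilde\gamma,-\frac12)$.
\item Your proposed repair, dualizing the second equivalence, does not help. That equivalence holds for $s\in(\frac12,\widetilde\gamma)$, so dualizing it returns exactly the same range $t\in(-\widetilde\gamma,-\frac12)$.
\item For $t\in[-\frac12,0)$ you would need a separate one-sided stability argument for the interpolets, which you do not supply.
\item At $t=0$ the bound is simply false. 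The dual wavelets are fine-level interpolets without vanishing moments, which is what $d=0$ encodes. For $u\equiv1$ this gives $|\langle 1,\widetilde\psi_{j,k}\rangle|\sim 2^{-j/2}$. Hence $\|\mathbf u_j\|_2\sim 1$ on every level and $\sum_{j<J}\|\mathbf u_j\|_2^2\sim J$.
\end{itemize}
Your estimate therefore picks up a factor $\sqrt J$ and is not uniform in $J$. This endpoint matters: the theorem is applied with $t'=0$ in Theorem~\ref{th: error1} and with $t=t'=0$ in Theorem~\ref{th: error2}.

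\textbf{How the paper avoids it.} The paper never needs a square-summed bound. It uses only the levelwise estimate $\|\mathbf u_j\|_2\lesssim 2^{-jt}\|u\|_{H^t(\Omega)}$, which in your notation reads $\sup_j a_j\lesssim\|u\|_{H^t(\Omega)}$.
\begin{itemize}
\item For $q\le t<\gamma$ it follows from \eqref{eq: jack} and \eqref{eq: bern}.
\item For $\gamma\le t\le 0$ it follows from $\|\widetilde\psi_{j,k}\|_{H^{-t}(\Omega)}\lesssim 2^{-jt}$ together with the finite overlap of the supports.
\item At $t=0$ it is just $\|\mathbf u_j\|_2\lesssim\|u\|_{L^2(\Omega)}$.
\end{itemize}

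\textbf{The fix.} Your own factor $2^{-(d'-t)(J-j')}$ already gives $\sum_{j'}2^{-(d'-t)(J-j')}a_{j'}\lesssim\sup_{j'}a_{j'}$. So replace $(\sum_{j'}a_{j'}^2)^{1/2}$ by $\sup_{j'}a_{j'}$. With this change you need no norm equivalence at all. You also never need $\|P_Ju\|_{H^t(\Omega)}$, which is infinite for $t\ge-\frac12$ because $P_Ju$ is a finite sum of Dirac measures.
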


\begin{proof}
    We start with \eqref{eq:1st error bound}.
    To this end, we bound the duality product by considering 
    the discrete interaction between the blocks at levels $(j,j')$ 
    of the discrete coefficients $\mathbf{u}_{j} 
    = [\langle u, \widetilde \psi_{j,k}\rangle]_{k \in \nabla_{j}}$ and $\mathbf{v}_{j'} =  [\langle v, 
    \widetilde \psi_{j',k'}\rangle]_{k' \in \nabla_{j'}}$: 
    \begin{align*}
        |\langle(\mathcal{K} - \mathcal{K}_J^\epsilon )P_Ju,P_Jv\rangle| 
        \leq \sum_{j = j_0}^{J-1} \sum_{j' = j_0}^{J-1} \Bigg|\sum_{k \in \nabla_j}
        \sum_{k' \in \nabla_{j'}} \langle(\mathcal{K} - \mathcal{K}_J^\epsilon)
        \psi_{j,k},\psi_{j',k'}\rangle\Bigg|\|\mathbf{u}_{j}\|_{2} \|\mathbf{v}_{j'}\|_{2}.
    \end{align*}
    By Lemma~\eqref{lem: matrix_bound}, we have that 
    \begin{align*}
       \Bigg|\sum_{k \in \nabla_j}\sum_{k' \in \nabla_{j'}}\langle(\mathcal{K} - \mathcal{K}_J^\epsilon)\psi_{j,k},\psi_{j',k'}\rangle\Bigg|
        \lesssim\epsilon 2^{2Jq}2^{-2d'(J - \frac{j + j'}{2})}
        &= \epsilon 2^{2J(q - d')}2^{d'j}2^{d'j'}.
    \end{align*}
    
    For $q\leq t<\gamma$, employing the Jackson and Bernstein 
    inequalities \eqref{eq: jack}, \eqref{eq: bern}, we are able 
    to bound the $\ell^2$-norm of the discrete coefficients 
    by the Sobolev norm:
    \[
        \|\mathbf{u}_{j}\|_2 \lesssim 2^{-jt}\|u\|_{H^t(\Omega)}, \quad
        \|\mathbf{v}_{j'}\|_2 \lesssim 2^{-j't'}\|v\|_{H^{t'}(\Omega)}.
    \]
    For $\gamma \leq t \leq 0,$ and analogously for $t'$, notice 
    that, since the dual wavelets are $L^2$-normalized, we have 
    $\|\widetilde{\psi}_{j,k}\|_{H^{-t}(\Omega)} \lesssim 2^{-jt},$ 
    where the constant depends on the $H^{-t}(\Omega)$-norm of 
    the mother wavelet. From this, we get
    \[
        |\langle u, \widetilde{\psi}_{j,k} \rangle | 
        \lesssim 2^{-jt}\|u\|_{H^t(\Omega_{j,k})}.
    \]
    Hence, we obtain the required bound by the following
    \[
        \|\mathbf u_j \|_2^2 = \sum_{k \in \nabla_j}|\langle u, \widetilde{\psi}_{j,k} \rangle |^2 
        \lesssim 2^{-2jt}\sum_{k \in \nabla_j}  \|u\|_{H^t(\Omega_{j,k})}^2 
        \lesssim 2^{-2jt}\|u\|_{H^t(\Omega)}^2
    \]
    and by taking the square root. Notice that the last inequality 
    does not depend on $j$ since the supports of $\Omega_{j,k}$ overlap 
    finitely many times. 
    
    With the above estimates, we arrive at
    \begin{align*}
        |\langle(\mathcal{K} - \mathcal{K}_J^\epsilon )P_Ju,P_Jv\rangle| 
        &\lesssim \sum_{j = j_0}^{J-1} \sum_{j' = j_0}^{J-1} 
        \epsilon 2^{2J(q - d')}2^{d'j}2^{d'j'}2^{-jt}\|u\|_{H^t(\Omega)} 2^{-j't'}\|v\|_{H^{t'}(\Omega)}\\
        =\epsilon 2^{2J(q - d')}&\|u\|_{H^t(\Omega)} \|v\|_{H^{t'}(\Omega)} 
        \bigg(\sum_{j = j_0}^{J-1} 2^{j(d'-t)}\bigg)\bigg(\sum_{j' = j_0}^{J-1} 2^{j'(d'-t')}\bigg)\\
        \lesssim \epsilon 2^{2J(q - d')}&\|u\|_{H^t(\Omega)} \|v\|_{H^{t'}(\Omega)} 2^{J(d'-t)}2^{J(d'-t')},
    \end{align*}
    which implies the desired estimate \eqref{eq:1st error bound}.
    In particular, by setting $t' = q$ and exploiting the definition 
    of the $H^{-q}(\Omega)$-norm in combination with (\ref{eq:1st error bound}), we get 
    \begin{align*}
       \|P_J^\star(\mathcal{K} - \mathcal{K}_J^\epsilon)P_Ju\|_{H^{-q}(\Omega)}
       & = \sup_{v\in H^q\Omega)}\frac{\langle P_J^\star(\mathcal{K} 
        - \mathcal{K}_J^\epsilon)P_Ju,v\rangle}{\|v\|_{H^q(\Omega)}} \\
        = \sup_{v\in H^q(\Omega)} &
        \frac{\langle (\mathcal{K} - \mathcal{K}_J^\epsilon)P_Ju,P_Jv\rangle}{\|v\|_{H^q(\Omega)}}  \lesssim \epsilon 2^{J(q-t)} \|u\|_{H^t(\Omega)},
    \end{align*}
    that is \eqref{eq:2nd error bound}.
\end{proof}
\section{Convergence}\label{sec: conv}
Having established the estimates in Section~\ref{sec: compr}, we can prove that the proposed compression strategy retains optimal order of convergence. In order to do so, in this section we will present some definitions and conditions that will prove to be useful for the proofs of convergence.

Recall the formulation \eqref{eq: var form}. We can define a 
similar formulation for the compressed system, i.e.
\begin{equation}\label{eq: var form compr}
    \begin{aligned}
        &\text{Find a discrete measure $u_J^\epsilon \in V_{J}(\Omega)$ such that}\\
        &\hspace*{20ex}\langle \mathcal{K}_J^\epsilon u_J^\epsilon, v_J \rangle = \langle f, v_J \rangle 
        \quad \forall v_J \in V_{J}(\Omega).
    \end{aligned}
\end{equation}
Following Sections~\ref{sec: RKHS} and \ref{sec:kern mat}, the
abstract discrete measure $u_J^{\epsilon} = \sum_{j,k} u_{j,k}^{\psi,\epsilon} 
\psi_{j,k}$ is obtained by solving the compressed system in the wavelet basis, 
i.e., $\mathbf K_J^{\psi,\epsilon}\mathbf u_J^{\psi,\epsilon} = \mathbf f^\psi_J$. 
Applying $\mathcal{K}$ to this measure yields the approximate kernel interpolant
\begin{equation}\label{eq: kern interp compr}
    f_{J}^{\epsilon} = \mathcal{K}u_{J}^{\epsilon} 
    = \sum_{j,k} u_{j,k}^{\psi,\epsilon} \langle \mathcal{K}\psi_{j,k},\cdot\rangle.
\end{equation}

We shall also present some conditions on $a,a',a''$ that 
define $\epsilon$ in \eqref{eq:consistency}. From 
Theorem~\ref{th: consistency}, we easily deduce:
\[
    |\langle(\mathcal{K} - \mathcal{K}_J^\epsilon )v_J,v_J\rangle| 
    \lesssim \epsilon \|v_J\|_{H^q(\Omega)}^2 \quad \forall v_J \in V_J.
\]
Moreover, since $\kappa$ is positive definite, we have that
\[
    \langle\mathcal{K}v_J,v_J\rangle \geq c  \|v_J\|_{H^q(\Omega)}^2
    \quad \forall v_J \in V_J,
\]
with $c>0$, holds uniformly with respect to $J$, by \eqref{eq: preconditioner}.
Thus, we also obtain
\begin{align*}
    \langle\mathcal{K}_J^\epsilon v_J,v_J\rangle 
    &= \langle\mathcal{K}v_J,v_J\rangle -\langle(\mathcal{K} -\mathcal{K}_J^\epsilon) v_J,v_J\rangle \\
    &\geq(c - C\epsilon)\|v_J\|_{H^q(\Omega)}^2 \gtrsim \|v_J\|_{H^q(\Omega)}^2
\end{align*}
for some $C > 0$, provided that $\epsilon$ is small enough. 
In this case, also $\mathbf K_J^{\psi,\epsilon}$ is positive definite.

We are now in the position to study the order of convergence 
between the approximate kernel interpolant $f_J^\epsilon$ and the 
function $f$.

\begin{theorem}\label{th: error1}
    Let $\epsilon$ from \eqref{eq:consistency} be sufficiently 
    small to ensure positive definiteness of $\mathcal{K}_J^\epsilon$. 
    Let $f_J^\epsilon$ be the kernel interpolant's approximation 
    defined in \eqref{eq: kern interp compr}. 
    If $f \in H^{-q}(\Omega)$, then the $L^2$-error we obtain satisfies
    the following error estimate:
    \begin{equation}\label{eq:convergence}
        \|f - f_J^\epsilon\|_{L^2(\Omega)} \lesssim 2^{Jq} \|f\|_{H^{-q}(\Omega)}
    \end{equation}
\end{theorem}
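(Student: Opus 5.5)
We follow the standard Strang-type argument from \cite{dahmen_compression_2006}. Let $u\in H^{q}(\Omega)$ be the exact solution of $\mathcal{K}u=f$. It exists because $\mathcal{K}\colon H^q(\Omega)\to H^{-q}(\Omega)$ is an isomorphism ($\mathcal H\cong H^{-q}$, $\kappa$ positive definite), and it satisfies $\|u\|_{H^q(\Omega)}\sim\|f\|_{H^{-q}(\Omega)}$. Write $f-f_J^\epsilon=\mathcal{K}(u-u_J^\epsilon)$. Since $2q<0$, the operator $\mathcal{K}$ maps $H^{q}$ boundedly into $H^{-q}\subset L^2$, so also $H^{-q}\hookrightarrow L^2$. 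Hence it suffices to bound $\|u-u_J^\epsilon\|$ in a norm weak enough that $\mathcal{K}$ maps it into $L^2$. The natural choice is the $H^{2q}$ norm: $\mathcal{K}\colon H^{2q}\to L^2$ is bounded, being of order $2q$. So the goal becomes $\|u-u_J^\epsilon\|_{H^{2q}}\lesssim 2^{Jq}\|u\|_{H^q}$.

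Step 1 (energy estimate). Using the uniform coercivity $\langle\mathcal{K}_J^\epsilon v_J,v_J\rangle\gtrsim\|v_J\|_{H^q}^2$ established just above, together with \eqref{eq:2nd error bound} for $t=q$, the first Strang lemma gives
\[
\|u-u_J^\epsilon\|_{H^q}\lesssim \inf_{v_J\in V_J}\Big(\|u-v_J\|_{H^q}+\|P_J^\star(\mathcal{K}-\mathcal{K}^\epsilon_J)v_J\|_{H^{-q}}\Big)\lesssim \|u\|_{H^q}.
\]
Here we take $v_J=P_Ju$ and use the stability of $P_J$ in $H^q$, which follows from \eqref{eq: jack} with $s=t=q$. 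This already yields the trivial bound $\|f-f_J^\epsilon\|_{L^2}\lesssim\|f\|_{H^{-q}}$ without any rate.

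Step 2 (Aubin--Nitsche duality). To gain the factor $2^{Jq}$, we estimate
\[
\|u-u_J^\epsilon\|_{H^{2q}}=\sup_{g\in H^{-2q}}\frac{\langle u-u_J^\epsilon,g\rangle}{\|g\|_{H^{-2q}}}.
\]
For each such $g$ we solve the adjoint problem $\mathcal{K}w=g$, which gives $w\in H^{0}=L^2$ with $\|w\|_{L^2}\lesssim\|g\|_{H^{-2q}}$. Note that $\mathcal{K}$ is symmetric. Galerkin orthogonality for the perturbed problem then yields
\[
\langle u-u_J^\epsilon,g\rangle=\langle\mathcal{K}(u-u_J^\epsilon),w-P_Jw\rangle+\langle(\mathcal{K}-\mathcal{K}_J^\epsilon)u_J^\epsilon,P_Jw\rangle .
\]
The first term is at most $\|u-u_J^\epsilon\|_{H^q}\|w-P_Jw\|_{H^{q}}$. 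By \eqref{eq: jack} with $s=q$ and $t=0$, we have $\|w-P_Jw\|_{H^{q}}\lesssim 2^{Jq}\|w\|_{L^2}$, so this term is bounded by $2^{Jq}\|u\|_{H^q}\|g\|_{H^{-2q}}$. For the second term, we split $u_J^\epsilon=P_Ju+(u_J^\epsilon-P_Ju)$ and apply \eqref{eq:1st error bound} with $t=q$ and $t'=0$. This gives the bound $\epsilon2^{J(2q-q)}\|u\|_{H^q}\|w\|_{L^2}$, where the $H^q$-norm of $u_J^\epsilon-P_Ju$ is controlled via Step 1.

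The main obstacle is this last consistency term. Theorem~\ref{th: consistency} is formulated for $P_Ju$ with $u$ in a Sobolev space, whereas $u_J^\epsilon$ is merely a discrete measure in $V_J$. I would handle this by observing that every $v_J\in V_J$ satisfies $v_J=P_Jv_J$, so \eqref{eq:1st error bound} applies directly with $\|v_J\|_{H^q}$ in place of $\|u\|_{H^t}$ (since $q\in(-\widetilde\gamma,\gamma)$ where the norm equivalences hold). A second, more technical check is that the range $t'=0$ is admissible, which requires $0\le 0$ and also $\gamma\le t'$ in the case split of that proof; this is harmless. Once both terms are bounded, we divide by $\|g\|_{H^{-2q}}$ and map back through $\mathcal{K}\colon H^{2q}\to L^2$, which gives \eqref{eq:convergence}.
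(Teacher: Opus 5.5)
The gap is the regularity claim in Step~2: that $\mathcal{K}w=g$ with $g\in H^{-2q}(\Omega)$ has a solution $w\in L^2(\Omega)$ with $\|w\|_{L^2(\Omega)}\lesssim\|g\|_{H^{-2q}(\Omega)}$. On the bounded interval this shift theorem is false, because solutions of the first-kind equation develop boundary singularities.

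Already the exponential kernel $\kappa(x,y)=e^{-|x-y|}$ (Mat\'ern with $\nu=\frac12$, so $q=-1$) is a counterexample. Every $\mathcal{K}w$ with $w\in L^2(\Omega)$ satisfies $(\mathcal{K}w)'(0)=(\mathcal{K}w)(0)$ and $(\mathcal{K}w)'(1)=-(\mathcal{K}w)(1)$. Hence $g\equiv 1\in H^2(\Omega)$ is not in $\mathcal{K}(L^2(\Omega))$; in fact $\mathcal{K}^{-1}1=\frac12(1+\delta_0+\delta_1)$. This is why the paper imposes $\mathcal{K}^{-1}f\in L^2(\Omega)$ as a separate hypothesis in Theorem~\ref{th: error2}, and why the full-domain rates in the experiments are reduced.

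Without the shift theorem you only know $w\in H^{q}(\Omega)$. Then $\|w-P_Jw\|_{H^q(\Omega)}$ produces no factor $2^{Jq}$, so your Aubin--Nitsche step does not deliver the rate. Your intermediate goal $\|u-u_J^\epsilon\|_{H^{2q}(\Omega)}\lesssim 2^{Jq}\|u\|_{H^q(\Omega)}$ is also stronger than what the statement needs.

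The repair is to skip the detour through $H^{2q}$. Since you measure $\mathcal{K}(u-u_J^\epsilon)$ in $L^2$, dualize that norm directly:
$\|\mathcal{K}(u-u_J^\epsilon)\|_{L^2(\Omega)}=\sup_{w\in L^2(\Omega)\setminus\{0\}}\langle\mathcal{K}(u-u_J^\epsilon),w\rangle/\|w\|_{L^2(\Omega)}$.
Now $w$ is an arbitrary $L^2$ function and no adjoint problem is needed. The rest of your Step~2 goes through verbatim with $\|w\|_{L^2(\Omega)}$ in place of $\|g\|_{H^{-2q}(\Omega)}$:
\begin{itemize}
\item the term with $w-P_Jw$ is handled by $\|\mathcal{K}(u-u_J^\epsilon)\|_{H^{-q}(\Omega)}\lesssim\|u\|_{H^q(\Omega)}$ from Step~1 together with \eqref{eq: jack} for $s=q$, $t=0$;
\item the Galerkin term $\langle(\mathcal{K}_J^\epsilon-\mathcal{K})u_J^\epsilon,P_Jw\rangle$ is handled by \eqref{eq:1st error bound} with $t=q$, $t'=0$, applied to $u_J^\epsilon=P_Ju_J^\epsilon$. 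The split via $P_Ju$ is unnecessary here.
\end{itemize}

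This is the paper's computation, with one difference. The paper applies it only to $f_J-f_J^\epsilon=\mathcal{K}(u_J-u_J^\epsilon)$ and bounds the uncompressed error $f-f_J$ separately by the interpolation estimate \eqref{eq:error1A}. The repaired version of your argument treats $u-u_J^\epsilon$ in one stroke and does not need that external estimate.
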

\begin{proof}
    By $f_J$ we denote the kernel interpolant that 
    corresponds to (\ref{eq: kern interp}). 
    Then, we can split
    \[
        \|f - f_J^\epsilon\|_{L^2(\Omega)} \lesssim \|f - f_J\|_{L^2(\Omega)} + \|f_J - f_J^\epsilon\|_{L^2(\Omega)}.
    \]
    For the first term, by \eqref{eq:error1A}, given 
    $f \in H^{-q}(\Omega)$, we have 
    \[
        \|f - f_J\|_{L^2(\Omega)} \lesssim h^{-q} \|f\|_{H^{-q}(\Omega)} = 2^{Jq}\|f\|_{H^{-q}(\Omega)}
    \]
    since we have equidistant points. Thus, we 
    concentrate on the second term. To this end, let $e_J = u_J 
    - u_J^\epsilon$. 
    We have the following identity:
    \[
        \|f_J - f_J^\epsilon\|_{L^2(\Omega)} = \|\mathcal{K}e_J\|_{L^2(\Omega)}
        = \sup_{g \in L^2(\Omega)\setminus \{0\}} 
        \frac{\langle \mathcal{K}e_J, g \rangle}{\|g\|_{L^2(\Omega)}}.
    \]
    Let $P_Jg\in V_J$ denote the projection of $g\in L^2(\Omega)$.  
    By linearity we have 
    \begin{equation}\label{eq: split}
        \langle \mathcal{K}e_J, g \rangle 
        = \langle \mathcal{K}e_J, g - P_Jg \rangle + \langle \mathcal{K}e_J, P_Jg \rangle
    \end{equation}
    For the first term, applying the Cauchy-Schwarz inequality 
    and the Jackson inequality \eqref{eq: jack}
    for $g\in L^2(\Omega)$ gives
    \[
        \langle \mathcal{K}e_J, g - P_Jg \rangle \le \|\mathcal{K}e_J\|_{H^{-q}(\Omega)} \|g - P_Jg\|_{H^{q}(\Omega)} 
        \lesssim \|\mathcal{K}e_J\|_{H^{-q}(\Omega)} \left( 2^{Jq} \|g\|_{L^2(\Omega)} \right).
    \]
    By the boundedness of $\mathcal K$, we have that 
    $\|\mathcal{K}e_J\|_{H^{-q}(\Omega)} \lesssim \|e_J\|_{H^{q}(\Omega)}$. 
    Thus, by writing $e_J = u_J -u +u-u_J^\epsilon, $ where $u \in H^q(\Omega)$ 
    is the exact solution satisfying $\mathcal{K}u = f$, we can apply Strang's 
    lemma to $u - u_J \text{ and } u - u_J^\epsilon$ following the reasoning of \cite{dahmen_compression_2006}, i.e.
    \[
        \|u - u_J\|_{H^{q}(\Omega)} \lesssim\inf_{v_j \in V_J}\|u - v_J\|_{H^{q}(\Omega)}
    \]
    and
    \[
        \|u - u_J^\epsilon\|_{H^{q}(\Omega)} 
        \lesssim\inf_{v_j \in V_J} \bigg \{\|u - v_J\|_{H^{q}(\Omega)} 
        + \sup_{w_J \in V_J} \frac{|\langle(\mathcal{K} - \mathcal{K}_J^\epsilon )v_J,w_J\rangle|}
        {\|w_J\|_{H^{q}(\Omega)}} \bigg \} 
    \]
    Taking $v_J :=P_Ju$, by Theorem~\ref{th: consistency}, we have 
    \[
        |\langle(\mathcal{K} - \mathcal{K}_J^\epsilon )P_Ju,w_J\rangle| 
        = |\langle(\mathcal{K} - \mathcal{K}_J^\epsilon )P_Ju,P_Jw_J\rangle| 
        \lesssim \epsilon \|u\|_{H^{q}(\Omega)} \|w_J\|_{H^{q}(\Omega)}. 
    \]
    Moreover, by \eqref{eq: jack}, we also obtain $\|u - P_Ju\|_{H^{q}(\Omega)} 
    \lesssim\|u\|_{H^{q}(\Omega)}$, which, in combination with the latter, gives us
    \[
        \|e_J\|_{H^{q}(\Omega)} \lesssim \|u\|_{H^{q}(\Omega)} \lesssim\|f\|_{H^{-q}(\Omega)}.
    \]
    Thus, for the first term of \eqref{eq: split}, we have 
    $\langle \mathcal{K}e_J, g - P_Jg \rangle\lesssim 2^{Jq} \|f\|_{H^{-q}(\Omega)} 
    |g\|_{L^2(\Omega)}$.
    
    For the second term of \eqref{eq: split}, recall that, given the variational 
    formulations \eqref{eq: var form} and \eqref{eq: var form compr}, we have:
    \begin{equation}\label{eq: swapping}
        \langle \mathcal{K}e_J, v_J \rangle = \langle \mathcal{K}(u_J - u_J^\epsilon), v_J \rangle 
        = \langle (\mathcal{K}_J^\epsilon - \mathcal{K})u_J^\epsilon, v_J \rangle \quad\forall v_J\in V_J.
    \end{equation}
    Thus, we obtain
    \[
        \langle \mathcal{K}e_J, P_Jg \rangle 
        =   \langle (\mathcal{K}_J^\epsilon - \mathcal{K})u_J^\epsilon, P_Jg \rangle.
    \]
    We apply again Theorem~\ref{th: consistency} with
    $t = q$ and $t' = 0$ and arrive at
    \[
        \langle(\mathcal{K}_J^\epsilon - \mathcal{K})u_J^\epsilon, 
            P_Jg \rangle \lesssim \epsilon 2^{J(2q - q)} \|u_J^\epsilon\|_{H^{q}(\Omega)} \|g\|_{L^2(\Omega)}
        \lesssim 2^{Jq} \|f\|_{H^{-q}(\Omega)} \|g\|_{L^2(\Omega)}.
    \]
    Adding the bounds and dividing by $\|g\|_{L^2(\Omega)}$ yields the claim.
\end{proof}

\begin{theorem}\label{th: error2}
    If, in addition to the assumptions of the previous theorem, the 
    doubling trick of \cite{sloan2024doublingrateimproved} is applicable
    and the function is smoother, that is $f \in H^{-2q}(\Omega)$ and 
    $\mathcal{K}^{-1}f\in L^2(\Omega)$, the error satisfies the 
    optimal bound:
    \[
        \|f - f_J^\epsilon\|_{L^2(\Omega)} \lesssim 2^{2Jq} \|f\|_{H^{-2q}(\Omega)}
    \]
\end{theorem}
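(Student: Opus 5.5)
We follow the proof of Theorem~\ref{th: error1} and gain the extra factor $2^{Jq}$ in each term through a duality (Aubin--Nitsche type) argument. First we split
\[
\|f-f_J^\epsilon\|_{L^2(\Omega)}\le \|f-f_J\|_{L^2(\Omega)}+\|f_J-f_J^\epsilon\|_{L^2(\Omega)}.
\]
For the first term we invoke the doubling estimate \eqref{eq:error1B} with $s=-q$ and $h_{X,\Omega}\sim 2^{-J}$, which is allowed by the hypothesis that the doubling trick of \cite{sloan2024doublingrateimproved} applies. This gives $\|f-f_J\|_{L^2(\Omega)}\lesssim 2^{2Jq}\|f\|_{H^{-2q}(\Omega)}$. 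It remains to show that the compression error $\|\mathcal K e_J\|_{L^2(\Omega)}$, with $e_J=u_J-u_J^\epsilon$, is of the same order.

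As before, we write $\|\mathcal K e_J\|_{L^2(\Omega)}=\sup_g \langle\mathcal K e_J,g\rangle/\|g\|_{L^2(\Omega)}$ and use the splitting \eqref{eq: split}. For the term $\langle \mathcal K e_J,P_Jg\rangle$ we use the identity \eqref{eq: swapping}, which turns it into $\langle(\mathcal K_J^\epsilon-\mathcal K)u_J^\epsilon,P_Jg\rangle$. We then replace $u_J^\epsilon$ by $P_Ju$, where $u=\mathcal K^{-1}f\in L^2(\Omega)$, and write $u_J^\epsilon = P_Ju + (u_J^\epsilon-P_Ju)$.
\begin{itemize}
\item For the part $\langle(\mathcal K-\mathcal K_J^\epsilon)P_Ju,P_Jg\rangle$ we apply \eqref{eq:1st error bound} with $t=t'=0$. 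This yields $\epsilon 2^{2Jq}\|u\|_{L^2(\Omega)}\|g\|_{L^2(\Omega)}$, and we use $\|u\|_{L^2(\Omega)}\lesssim\|f\|_{H^{-2q}(\Omega)}$.
\item For the part involving $u_J^\epsilon-P_Ju$ we apply \eqref{eq:1st error bound} with $t=q$, $t'=0$. This produces the factor $2^{Jq}\|u_J^\epsilon-P_Ju\|_{H^q(\Omega)}$.
\end{itemize}
So it suffices to bound $\|u_J^\epsilon-P_Ju\|_{H^q(\Omega)}\lesssim 2^{Jq}\|u\|_{L^2(\Omega)}$. This follows from Strang's lemma together with \eqref{eq:2nd error bound} for $t=0$, and from the Jackson estimate \eqref{eq: jack} $\|u-P_Ju\|_{H^q(\Omega)}\lesssim 2^{Jq}\|u\|_{L^2(\Omega)}$ (recall $q<0$).

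For the term $\langle\mathcal K e_J,g-P_Jg\rangle$ we again bound it by $\|\mathcal K e_J\|_{H^{-q}(\Omega)}\,2^{Jq}\|g\|_{L^2(\Omega)}$. We then need the improved energy estimate $\|e_J\|_{H^q(\Omega)}\lesssim 2^{Jq}\|u\|_{L^2(\Omega)}$. We obtain it from the same two Strang bounds, now using the smoother $u\in L^2(\Omega)$ so that the best-approximation term contributes $2^{Jq}\|u\|_{L^2(\Omega)}$. The consistency term is handled by \eqref{eq:2nd error bound} with $t=0$, which gives $\epsilon 2^{Jq}\|u\|_{L^2(\Omega)}$. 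Combining both terms yields the claim.

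The main obstacle is that the approximation arguments for $u$ treat $V_J$ as a space of Dirac measures. In particular, the Jackson estimate must apply with $s=q$, $t=0$ in the admissible range $s<\gamma=-\tfrac12$. This requires $q<-\tfrac12$, which holds for kernels reproducing $H^{-q}$ with $-q>\tfrac12$. A second delicate point is that Strang's lemma needs stability of $\mathcal K_J^\epsilon$ on $V_J$ in $H^q(\Omega)$. This was established before Theorem~\ref{th: error1} for $\epsilon$ small, so we will invoke it there.
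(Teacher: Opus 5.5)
Your proposal is correct and follows the paper's proof essentially step for step. You use the same split with the doubling estimate \eqref{eq:error1B} for $f-f_J$, the same decomposition \eqref{eq: split}, the swap \eqref{eq: swapping} with $u_J^\epsilon=P_Ju+(u_J^\epsilon-P_Ju)$, the same choices $(t,t')=(0,0)$ and $(q,0)$ in Theorem~\ref{th: consistency}, and the same improved Strang/Jackson bound $\|e_J\|_{H^q(\Omega)}\lesssim 2^{Jq}\|u\|_{L^2(\Omega)}$ for the $g-P_Jg$ term; the range and stability conditions you flag are the ones the paper relies on implicitly.
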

\begin{proof}
    We consider the identical dual splitting as in the previous theorem:
    \[
        \|f - f_J^\epsilon\|_{L^2(\Omega)} \lesssim \|f - f_J\|_{L^2(\Omega)} + \|f_J - f_J^\epsilon\|_{L^2(\Omega)}.
    \]
    For the first term, by using \cite[Theorem\ 2]{sloan2024doublingrateimproved}, 
    we can double the rate of convergence, provided $f$ smooth enough. In 
    particular, since $f \in H^{-2q}(\Omega)$, \eqref{eq:error1B} gives us 
    \[
        \|f - f_J\|_{L^2(\Omega)} \lesssim h^{-2q} \|f\|_{H^{-2q}(\Omega)} = 2^{2Jq}\|f\|_{H^{-2q}(\Omega)}.
    \]
    For the second term, by assumption, the exact measure has higher regularity,
    $u = \mathcal{K}^{-1}f\in L^2(\Omega)$, so we look again at the 
    following expansion:
    \[
        \langle \mathcal{K}e_J, g \rangle 
        = \langle \mathcal{K}e_J, g - P_Jg \rangle + \langle \mathcal{K}e_J, P_Jg \rangle.
    \]
    We can bound the term $\|\mathcal{K}e_J\|_{H^{-q}(\Omega)}$ as in 
    Theorem~\ref{th: error1}, but since now we have that $u\in L^2(\Omega)$, 
    we gain a factor of $2^{Jq}$ by using Theorem~\ref{th: consistency} 
    with $t = 0$, $t' = q$, and \eqref{eq: jack} with $s = q$, $t = 0$:
    \[
        \|\mathcal{K}e_J\|_{H^{-q}(\Omega)}\lesssim 2^{Jq} \|u\|_{L^2(\Omega)} 
        \lesssim 2^{Jq} \|f\|_{H^{-2q}(\Omega)}.
    \]
    Substituting this into the dual approximation bound gives:
    \begin{equation}\label{eq: thm7.2_0}
        \langle \mathcal{K}e_J, g - P_Jg \rangle 
        \lesssim \left( 2^{Jq} \|f\|_{H^{-2q}(\Omega)} \right) \left( 2^{Jq} \|g\|_{L^2(\Omega)} \right) 
        = 2^{2Jq} \|f\|_{H^{-2q}(\Omega)}\|g\|_{L^2(\Omega)}.
    \end{equation}
    
    For the term $\langle \mathcal{K}e_J, P_Jg \rangle$, we proceed 
    in the following way. First, recall that, by \eqref{eq: swapping} 
    and since $P_Jg\in V_J$, we have
    \[
        \langle \mathcal{K}e_J, P_Jg \rangle = \langle \mathcal{K}(u_J-u_J^\epsilon), P_Jg \rangle
        = \langle (\mathcal{K}_J^\epsilon - \mathcal{K})u_J^\epsilon, P_Jg \rangle.
    \]
    Noticing that $u_J^\epsilon = P_Ju + (u_J^\epsilon - P_Ju)$, we arrive at
    \[
        \langle (\mathcal{K}_J^\epsilon - \mathcal{K})u_J^\epsilon, P_Jg \rangle 
        = \langle (\mathcal{K}_J^\epsilon - \mathcal{K})P_Ju, P_Jg \rangle 
        + \langle (\mathcal{K}_J^\epsilon - \mathcal{K})(u_J^\epsilon - P_Ju), P_Jg \rangle.
    \]
    We consider the two terms on the right-hand side separately. 
    For the first term, we have, using Theorem~\ref{th: consistency} 
    with $t, t' = 0$
    \begin{equation}\label{eq: thm7.2_1}
        \langle (\mathcal{K}_J^\epsilon - \mathcal{K})P_Ju, P_Jg \rangle 
        \lesssim \epsilon 2^{2Jq} \|u\|_{L^2(\Omega)}\|g\|_{L^2(\Omega)} 
        \lesssim  2^{2Jq} \|f\|_{H^{-2q}(\Omega)}\|g\|_{L^2(\Omega)}.
    \end{equation}
    For the second one, using again Theorem~\ref{th: consistency} 
    with $t = q$, $t' = 0$., we have
    \begin{align*}
        \langle (\mathcal{K}_J^\epsilon - \mathcal{K})(u_J^\epsilon -P_Ju), P_Jg \rangle 
        \lesssim \epsilon 2^{Jq} \|u_J^\epsilon - P_Ju\|_{H^{q}(\Omega)}\|g\|_{L^2(\Omega)}
    \end{align*}
    By the triangle inequality, we then have 
    \[
        \|u_J^\epsilon - P_Ju\|_{H^{q}(\Omega)} \leq \|u_J^\epsilon - u\|_{H^{q}(\Omega)} +\|u - P_Ju\|_{H^{q}(\Omega)}.
    \]
    By Strang's lemma, as in Theorem~\ref{th: error1}, we know that 
    \[
        \|u_J^\epsilon - u\|_{H^{q}(\Omega)} \lesssim2^{Jq}\|u\|_{L^2(\Omega)} \lesssim 2^{Jq} \|f\|_{H^{-2q}(\Omega)},
    \]
    where we now gain a factor of $2^{Jq}$ since $u\in L^2(\Omega)$. On the other hand, using \eqref{eq: jack} we obtain 
    \[
        \|u - P_Ju\|_{H^{q}(\Omega)} \lesssim 2^{Jq}\|u\|_{L^2(\Omega)} \lesssim 2^{Jq} \|f\|_{H^{-2q}(\Omega)}.
    \]
    Thus, we arrive at
    \begin{equation}\label{eq: thm7.2_2}
        \langle (\mathcal{K}_J^\epsilon - \mathcal{K})(u_J^\epsilon -P_Ju), P_Jg \rangle 
        \lesssim 2^{2Jq} \|f\|_{H^{-2q}(\Omega)}\|g\|_{L^2(\Omega)}.
    \end{equation}
    Summing the bounds \eqref{eq: thm7.2_0}, \eqref{eq: thm7.2_1}, 
    \eqref{eq: thm7.2_2} and dividing by $\|g\|_{L^2(\Omega)}$ yields 
    finally the desired estimate.
\end{proof}
Given the two results found in this section, we can then state the following:
\begin{corollary}
    Let $\epsilon$ from \eqref{eq:consistency} be sufficiently 
    small to ensure positive definiteness of $\mathcal{K}_J^\epsilon$. 
    Let $f_J^\epsilon$ be the kernel interpolant's approximation 
    defined in (\ref{eq: kern interp compr}). If $f \in H^{t}(\Omega)$ 
    for some $-q\leq t \leq -2q$, and such that $\mathcal{K}^{-1}f \in 
    H^{t + 2q}(\Omega)$, then we obtain the following error estimate:
    \begin{equation}\label{eq:convergence_interp}
        \|f - f_J^\epsilon\|_{L^2(\Omega)} \lesssim 2^{-Jt} \|f\|_{H^{t}(\Omega)}.
    \end{equation}
\end{corollary}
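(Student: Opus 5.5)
Since $q<0$, the range $-q\le t\le -2q$ runs between the two endpoint cases. The plan is to recover the estimate \eqref{eq:convergence_interp} by repeating the proof of Theorem~\ref{th: error1} and Theorem~\ref{th: error2} with a general smoothness index. At the endpoints the statement is exactly Theorem~\ref{th: error1} ($t=-q$, $\mathcal K^{-1}f\in H^{q}$) and Theorem~\ref{th: error2} ($t=-2q$, $\mathcal K^{-1}f\in L^2$). Write $u=\mathcal K^{-1}f\in H^{s}(\Omega)$ with $s:=t+2q\in[q,0]$. Two routes are available: a direct re-run of the proofs with $s$ in place of $q$ resp.\ $0$, or an operator interpolation argument between the two endpoint bounds. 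I would follow the direct route, because the error map $f\mapsto f-f_J^\epsilon$ is linear in $f$ (for fixed $J,\epsilon$), and $H^t$ together with the constraint $\mathcal K^{-1}f\in H^{t+2q}$ behaves well under interpolation only if $\mathcal K$ is an isomorphism on the scale. The direct route avoids this issue.

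\textbf{Splitting.} As before, I split
\[
\|f-f_J^\epsilon\|_{L^2(\Omega)}\le\|f-f_J\|_{L^2(\Omega)}+\|f_J-f_J^\epsilon\|_{L^2(\Omega)}.
\]
For the first term I need $\|f-f_J\|_{L^2(\Omega)}\lesssim h^{-t}\|f\|_{H^t(\Omega)}$ for intermediate $t$. I would obtain this either from the sampling-inequality version of \eqref{eq:error1A}/\eqref{eq:error1B} with fractional exponent (the escape/doubling results in \cite{sloan2024doublingrateimproved} cover intermediate smoothness), or by interpolating the linear map $f\mapsto f-f_J$ between $H^{-q}$ and $H^{-2q}$. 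This interpolation is legitimate here because $f_J$ does not depend on $\epsilon$ and the map is linear in $f$. Since $h=2^{-J}$, this yields $2^{-Jt}\|f\|_{H^t(\Omega)}$.

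\textbf{Second term.} I would use the duality $\sup_g\langle\mathcal Ke_J,g\rangle/\|g\|_{L^2}$ and split $g=(g-P_Jg)+P_Jg$. For the first piece, the Jackson estimate gives $\|g-P_Jg\|_{H^q}\lesssim 2^{Jq}\|g\|_{L^2}$. It then suffices to show $\|e_J\|_{H^q}\lesssim 2^{J(q-s)}\|u\|_{H^s}$. This follows from Strang's lemma with $v_J=P_Ju$: the Jackson inequality gives $\|u-P_Ju\|_{H^q}\lesssim2^{-J(s-q)}\|u\|_{H^s}$, and Theorem~\ref{th: consistency} with $t\to s$, $t'=q$ bounds the consistency term by $\epsilon2^{J(q-s)}\|u\|_{H^s}$. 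Together this gives $2^{J(2q-s)}=2^{-Jt}$, using $\|u\|_{H^s}\lesssim\|f\|_{H^t}$ by boundedness of $\mathcal K^{-1}$ on the scale. For the piece $\langle(\mathcal K^\epsilon_J-\mathcal K)u_J^\epsilon,P_Jg\rangle$, I write $u_J^\epsilon=P_Ju+(u_J^\epsilon-P_Ju)$. Theorem~\ref{th: consistency} with $(s,0)$ gives $\epsilon2^{J(2q-s)}\|u\|_{H^s}\|g\|_{L^2}$, and with $(q,0)$ it gives $\epsilon2^{Jq}\|u_J^\epsilon-P_Ju\|_{H^q}\|g\|_{L^2}$. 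The latter is again $\lesssim2^{J(2q-s)}$ by the previous Strang bound.

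\textbf{Main obstacle.} The delicate point is the first term, i.e.\ the intermediate-rate kernel interpolation estimate. The paper only quotes the endpoint rates \eqref{eq:error1A} and \eqref{eq:error1B}, and the interpolation step requires the spaces $\{f\in H^t:\mathcal K^{-1}f\in H^{t+2q}\}$ to form an interpolation scale. I would first try to argue that, because $\mathcal K$ maps $H^{s}$ isomorphically onto $H^{s-2q}$ in the range considered, the extra condition is automatically the interpolant of the endpoint conditions. Failing that, I would cite the fractional doubling estimate of \cite{sloan2024doublingrateimproved} directly.
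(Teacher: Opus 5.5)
Your proposal is correct, but it takes a different route from the paper. The paper's proof is a one-liner: it interpolates between the endpoint results, Theorem~\ref{th: error1} at $t=-q$ and Theorem~\ref{th: error2} at $t=-2q$.

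You instead rerun the proof of Theorem~\ref{th: error2} with $u=\mathcal K^{-1}f\in H^{s}(\Omega)$, $s=t+2q\in[q,0]$, in place of $u\in L^2(\Omega)$. This is legitimate because Theorem~\ref{th: consistency} and the Jackson estimate \eqref{eq: jack} already hold at every intermediate index. Your bookkeeping $2^{Jq}\cdot 2^{J(q-s)}=2^{J(2q-s)}=2^{-Jt}$ is right in each piece.

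What your route buys is an interpolation-free treatment of the compression error. It also makes explicit the assumption $\|\mathcal K^{-1}f\|_{H^{s}(\Omega)}\lesssim\|f\|_{H^{t}(\Omega)}$. The paper uses this tacitly too (at $s=0$ in Theorem~\ref{th: error2}), and its terse interpolation argument hides it, since the $t=-2q$ endpoint only holds on the constrained set $\{f:\mathcal K^{-1}f\in L^2(\Omega)\}$. The paper's route buys brevity.

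On your first term, two remarks. First, your justification that interpolating $f\mapsto f-f_J$ is fine ``because $f_J$ does not depend on $\epsilon$'' is beside the point, since $f\mapsto f-f_J^\epsilon$ is just as linear. The real issue is the constrained endpoint, which you correctly flag.

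Second, that issue disappears if you interpolate in $u$ rather than in $f$. The map $u\mapsto f-f_J$, $f=\mathcal Ku$, is bounded $H^q(\Omega)\to L^2(\Omega)$ with norm $\lesssim 2^{Jq}$ and $L^2(\Omega)\to L^2(\Omega)$ with norm $\lesssim 2^{2Jq}$, and $[H^q(\Omega),L^2(\Omega)]_\theta=H^{s}(\Omega)$. You can even avoid interpolation altogether. By $\mathcal H$-orthogonality and the reproducing property, $\|f-f_J\|_{\mathcal H}^2=\langle u,f-f_J\rangle\le\|u\|_{H^{s}(\Omega)}\|f-f_J\|_{H^{-s}(\Omega)}$. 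Then apply sampling inequalities to $f-f_J$, which vanishes on $X$. This gives $\|f-f_J\|_{L^2(\Omega)}\lesssim 2^{-Jt}\|u\|_{H^{s}(\Omega)}$ and makes your whole argument self-contained.
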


\begin{proof}
    The argument comes directly from Theorems~\ref{th: error1} 
    and \ref{th: error2}, in addition to standard interpolation 
    theory for Sobolev spaces (see \cite{triebel1978interpolation} 
    for example).
\end{proof}
\section{Multivariate setting}\label{sec: sparse}
In this section, we consider the multivariate 
setting, which we will tackle by means of sparse grid 
technique to mitigate the curse of dimensionality. To this 
end, consider $n \in \mathbb{N}$ equal RKHS $\mathcal{H}^{(i)} 
\equiv \mathcal{H}$ with reproducing kernels $\kappa_{i}(x_{i}, y_{i}) 
\equiv \kappa(x_{i}, y_{i})$ and associated regions $\Omega_i \equiv 
[0,1]$, $i = 1,\dots n$. We are interested in the efficient 
approximation of functions in the tensor product space
\[
    \boldsymbol{\mathcal{H}} := \bigotimes_{i=1}^{n} \mathcal{H}.
\]
Of course, this is again an RKHS with reproducing kernel in product form
\[
    \boldsymbol{\kappa}(\boldsymbol{x}, \boldsymbol{y}) := \kappa(x_{1}, y_{1}) \cdots \kappa(x_{n}, y_{n}),
\]
where $\boldsymbol{x} = (x_{1}, \ldots, x_{n}), \boldsymbol{y} = (y_{1}, \ldots, y_{n}) 
\in \boldsymbol{\Omega}$ with $\boldsymbol{\Omega} := \Omega_{1} \times \cdots \times \Omega_{n}
= [0,1]^n$ denoting the $n$-fold product region. Given a multi-index $\boldsymbol{j} 
= [j_{1}, \dots, j_{n}] \in \mathbb{N}_{0}^{n}$, we can define the \emph{tensor product grid}
\[
    \boldsymbol{X}_{\boldsymbol{j}} := X_{j_{1}} \times \cdots \times X_{j_{n}} \subset \boldsymbol{\Omega}
\]
with associated tensor product approximation space
\[
    \boldsymbol{\mathcal{H}}_{\boldsymbol{j}} 
    := \operatorname{span}\{\boldsymbol{\kappa}(\cdot, \boldsymbol{x}) : \boldsymbol{x} \in \boldsymbol{X}_{\boldsymbol{j}}\} 
    = \mathcal{H}_{j_{1}} \otimes \cdots \otimes \mathcal{H}_{j_{n}} \subset \boldsymbol{\mathcal{H}}.
\]

The coefficients of the associated kernel interpolant 
$\boldsymbol{f}_{\boldsymbol{j}}\in\boldsymbol{\mathcal{H}}_{\boldsymbol{j}}$ 
of a function $\boldsymbol{f}\in\boldsymbol{\mathcal{H}}$ are retrieved 
by solving the linear system of equations
\begin{equation}\label{eq:LSE_multi}
    (\mathbf{K}_{j_{1}} \otimes \cdots \otimes \mathbf{K}_{j_{n}}){\mathbf u}_{\boldsymbol{j}}
    = {\mathbf f}_{\boldsymbol{j}}
\end{equation}
involving the Kronecker product of the univariate kernel matrices
$\mathbf{K}_{j_i} = [\kappa(x, y)]_{x,y\in X_{j_i}}$, $i=1,\ldots,n$, 
while the right-hand side is defined as ${\mathbf f}_{\boldsymbol{j}}
= [f(\boldsymbol{x}_{k})]_{\boldsymbol{x}_{k}\in\boldsymbol{X}_{\boldsymbol{j}}}$.
Since the kernel interpolant is the best approximation in each of the 
univariate subspaces, it is evident that $\boldsymbol{f}_{\boldsymbol{j}}$ 
is the best approximation of $\boldsymbol{f}\in \boldsymbol{\mathcal{H}}$ 
in the subspace $\boldsymbol{\mathcal{H}}_{\boldsymbol{j}}$ with 
respect to the norm in $\boldsymbol{\mathcal{H}}$. Nonetheless, 
in view of
\[
    |\boldsymbol{X}_{\boldsymbol{j}}| = \prod_{i=1}^{n} |X_{j_{i}}| \sim 2^{Jn}
\]
if $\boldsymbol{j} = [J,\ldots,J]$, the number of interpolation 
points grows exponentially in $n$, such that the computation of 
$\boldsymbol{f_j}$ suffers from the \emph{curse of dimension}.

We shall employ a \emph{sparse tensor product approximation} 
$\boldsymbol{\hat{\mathcal{H}}}_{\boldsymbol{j}}$, following the 
lines of \cite{SGN,GHM26} to mitigate the curse of dimension. In particular, 
we only apply the canonical sparse tensor product space, which is obtained 
from \cite{GHM26} by the weight factor $\boldsymbol{w} = [1,\dots,1]$. 
We therefore introduce the multivariate detail projections
\[
    \boldsymbol{Q_j} := Q_{j_1} \otimes \cdots \otimes Q_{j_n}:
    \boldsymbol{\mathcal{H}}\to\boldsymbol{\mathcal{W}_j} := \boldsymbol{Q_j}(\boldsymbol{\mathcal{H}}), 
    \quad \boldsymbol{j}\geq \boldsymbol{0}.
\]
and denote
\[
    \| {\boldsymbol j}\|_{\ell^\infty} := \max_i |j_i|, \qquad \| {\boldsymbol j}\|_{\ell^1} := \sum_i |j_i|.
\]
Then, the full tensor product space $\boldsymbol{\mathcal{H}_J}$ 
and the sparse tensor product space $\boldsymbol{\hat{\mathcal{H}}}_{\boldsymbol J}$ 
can be characterized by
\[
    \boldsymbol{\mathcal{H}_J} = \bigoplus_{\|{\boldsymbol j}\|_{\ell^\infty} \leq J} \boldsymbol{\mathcal{W}_j}, 
    \quad
    \boldsymbol{\hat{\mathcal{H}}}_{\boldsymbol J} = \bigoplus_{\|{\boldsymbol j}\|_{\ell^1} \leq J} \boldsymbol{\mathcal{W}_j}.
\]
Especially, by the identity
\[
    \boldsymbol{P_j} = \sum_{\boldsymbol\ell \leq {\boldsymbol j}} \boldsymbol{Q_\ell},
    \quad\text{where}\quad
    \boldsymbol{P_j} := P_{j_1} \otimes \cdots \otimes P_{j_n},
\]
and by following the lines of \cite{GHM26}, one has
\begin{equation}\label{eq: sparse proj}
    \sum_{\|{\boldsymbol j}\|_{\ell^1} \leq J} \boldsymbol{Q_j} 
    = \sum_{{\boldsymbol j} \in \boldsymbol{\mathcal{J}_J}} c_{\boldsymbol j}\boldsymbol{P_j},
    \quad c_{\boldsymbol j} := \sum_{\substack{j' \in \{0,1\}^n \\ \|j + j'\|_{\ell^1} \leq J}} (-1)^{\|j'\|_{\ell^1}},
\end{equation}
with the \emph{combination technique index set}
\[
    \boldsymbol{\mathcal{J}_J} :=\big\{{\boldsymbol j} \in \mathbb{N}_0^n: 
    \ J - n < \|{\boldsymbol j}\|_{\ell^1}\leq J\big\}.
\]
As a consequence, we can compute the sparse grid interpolant by 
a weighted sequence of anisotropic tensor product interpolants
in accordance with
\[
    \boldsymbol{\hat f_J} = \boldsymbol{\mathcal{K}\hat{u}_J}
    = \sum_{{\boldsymbol j}\in\boldsymbol{\mathcal{J}_J}} c_{\boldsymbol j}
    \boldsymbol{\mathcal{K}u_j}\quad\text{with}\quad 
    \boldsymbol{\mathcal{K}} = \mathcal{K}\otimes\cdots\otimes\mathcal{K},
\]
where the coefficients of $\boldsymbol{u_j}$ are given by 
\eqref{eq:LSE_multi} and $c_{\boldsymbol j}$ by \eqref{eq: sparse proj}, 
see \cite{GHM26} for details.

We will solve the systems \eqref{eq:LSE_multi} by using the 
compressed matrices $\mathbf{K}_{j_i}^{\psi,\epsilon}$, 
which yields perturbed coefficients
\[
    (\mathbf{K}_{j_{1}}^{\psi,\epsilon} \otimes \cdots \otimes \mathbf{K}_{j_{n}}^{\psi,\epsilon})
    {\mathbf u}_{\boldsymbol j}^{\boldsymbol \psi,\boldsymbol \epsilon} 
    = {\mathbf f}_{\boldsymbol j}^{\boldsymbol{\psi}}.
\]
With these coefficients at hand, we derive a perturbed 
sparse grid interpolant given by
\begin{equation}\label{eq: sparse interp}
    \boldsymbol{\hat f_J^\epsilon}
    = \boldsymbol{\mathcal{K}\hat{u}_J^{\psi,\epsilon}}
    = \sum_{{\boldsymbol j}\in\boldsymbol{\mathcal{J}_J}} c_{\boldsymbol j}
    \boldsymbol{\mathcal{K}u_j^{\psi,\epsilon}}.
\end{equation}
Having proven the convergence in case of the matrix compression 
in the univariate case, we can also state and prove the following 
theorem in the multivariate case, following the setting of Section 
\ref{sec: sparse}. Note that in this result $H_{mix}^t(\boldsymbol{\Omega})
= H^t([0,1])\otimes\cdots\otimes H^t([0,1])$ denotes the standard
Sobolev space of dominant mixed smoothness.

\begin{theorem}
    Let $\epsilon$ from \eqref{eq:consistency} be sufficiently small to 
    ensure positive definiteness of $\boldsymbol{\mathbf{K}_j^{\psi,\epsilon}}$ 
    for each ${\boldsymbol j} \in \boldsymbol{\mathcal{J}_J}$. Let $\boldsymbol{\hat f_J^\epsilon}$ 
    be the kernel interpolant's approximation defined in \eqref{eq: sparse interp}. 
    If $\boldsymbol{f}\in H_{mix}^t(\boldsymbol{\Omega})$, with $-q\leq t
    \leq -2q$, then we obtain the error estimate
    \begin{equation}\label{eq:convergence_interp_sparse}
      \|\boldsymbol f - \boldsymbol{\hat f_J^\epsilon}\|_{L^2(\boldsymbol\Omega)}
      \lesssim 2^{-Jt}J^{n-1} \|\boldsymbol f\|_{H_{mix}^t(\boldsymbol\Omega)}.
    \end{equation}
\end{theorem}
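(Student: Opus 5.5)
We plan to follow the standard sparse grid combination technique analysis from \cite{GHM26}, with the univariate compressed kernel interpolation operators in place of the exact ones. For each univariate level $j$ define the (linear) operator $I_j^\epsilon f := f_j^\epsilon = \mathcal{K}u_j^\epsilon$, where $u_j^\epsilon$ solves the compressed system \eqref{eq: var form compr} on level $j$. Because the Kronecker system with $\mathbf K_{j_1}^{\psi,\epsilon}\otimes\cdots\otimes\mathbf K_{j_n}^{\psi,\epsilon}$ decouples, the anisotropic perturbed interpolant is the tensor product operator $\boldsymbol{I}^\epsilon_{\boldsymbol j} = I^\epsilon_{j_1}\otimes\cdots\otimes I^\epsilon_{j_n}$. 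Set $I^\epsilon_{-1}:=0$ and $\Delta_j := I_j^\epsilon - I_{j-1}^\epsilon$. Since the combination coefficients $c_{\boldsymbol j}$ in \eqref{eq: sparse proj} are purely combinatorial, the same identity gives
\[
\boldsymbol{\hat f_J^\epsilon} = \sum_{\|\boldsymbol j\|_{\ell^1}\le J} \Delta_{j_1}\otimes\cdots\otimes\Delta_{j_n}\,\boldsymbol f .
\]

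The first step establishes the univariate estimates. The Corollary of Section~\ref{sec: conv}, i.e.\ \eqref{eq:convergence_interp}, gives $\|(\mathrm{Id}-I_j^\epsilon)f\|_{L^2}\lesssim 2^{-jt}\|f\|_{H^t}$ for $-q\le t\le -2q$. Combining this with the triangle inequality over levels $j-1,j$ yields
\[
\|\Delta_j f\|_{L^2}\lesssim 2^{-jt}\|f\|_{H^t}.
\]
We also need $L^2$-stability of $I_j^\epsilon$ on $H^t$, namely $\|I_j^\epsilon f\|_{L^2}\lesssim\|f\|_{H^t}$, which follows from the same bound since $t\ge 0$. Uniformity of the constants in $j$ is guaranteed because $\epsilon$ is fixed and small for all levels; this is exactly the hypothesis on $\boldsymbol{\mathbf K_j^{\psi,\epsilon}}$.

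The second step tensorizes. For $\boldsymbol f\in H^t_{mix}=\bigotimes H^t$, the cross norm of tensor products of bounded operators gives
\[
\|\Delta_{j_1}\otimes\cdots\otimes\Delta_{j_n}\boldsymbol f\|_{L^2(\boldsymbol\Omega)}\lesssim 2^{-t\|\boldsymbol j\|_{\ell^1}}\|\boldsymbol f\|_{H^t_{mix}},
\]
because $L^2(\boldsymbol\Omega)$ and $H^t_{mix}$ are Hilbert tensor products. We then write $\boldsymbol f = \sum_{\boldsymbol j\ge 0}\bigotimes_i\Delta_{j_i}\boldsymbol f$; this telescoping series converges in $L^2$ because $I_j^\epsilon\to\mathrm{Id}$ in each factor, which follows from the first step. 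The error is therefore bounded by the tail:
\[
\|\boldsymbol f-\boldsymbol{\hat f_J^\epsilon}\|_{L^2}\le\sum_{\|\boldsymbol j\|_{\ell^1}>J}2^{-t\|\boldsymbol j\|_{\ell^1}}\|\boldsymbol f\|_{H^t_{mix}}\lesssim\sum_{m>J}m^{n-1}2^{-tm}\|\boldsymbol f\|_{H^t_{mix}}\lesssim J^{n-1}2^{-Jt}\|\boldsymbol f\|_{H^t_{mix}},
\]
where $t>0$ makes the geometric tail converge. This is the claim \eqref{eq:convergence_interp_sparse}.

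The main obstacle is the first step. The difference bound for $\Delta_j$ needs $f\in H^t$ alone, whereas the Corollary additionally assumes $\mathcal K^{-1}f\in H^{t+2q}$. The corresponding mixed condition, that $\boldsymbol{\mathcal K}^{-1}\boldsymbol f$ lie in $H^{t+2q}_{mix}$, is implicitly needed in the theorem's hypothesis. I would state this assumption and use it factorwise, since $\boldsymbol{\mathcal K}^{-1}=\bigotimes\mathcal K^{-1}$. For $t=-q$ it is automatic, and the general case then follows by interpolation.
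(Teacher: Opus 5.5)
Your proposal is correct and takes the same route as the paper. The paper's proof only invokes \cite[Theorem 3.1]{GHM26} and \cite[Theorem 4.3]{griebel_note_2013} together with the univariate bound \eqref{eq:convergence_interp}; your difference operators $\Delta_j$, the cross-norm tensorization and the tail sum $\sum_{m>J}m^{n-1}2^{-tm}$ spell out the argument behind those citations.

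Your caveat is also well founded. The bound \eqref{eq:convergence_interp} needs $\mathcal K^{-1}f\in H^{t+2q}(\Omega)$, which holds automatically at $t=-q$ but not in general, so the mixed condition on $\boldsymbol{\mathcal K}^{-1}\boldsymbol f$ has to be kept as an explicit hypothesis. The paper's proof inherits this assumption silently, and its own experiments indicate the bound fails at $t=-2q$ without it: there $\boldsymbol f\equiv\boldsymbol 1$ converges only like $N^{-\frac{27}{16}}$, rather than $N^{-\frac{19}{8}}$, in $L^2$ on the full cube.
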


\begin{proof}
    By \eqref{eq:convergence_interp}, we can follow the 
    proofs of \cite[Theorem 3.1]{GHM26}, \cite[Theorem 4.3]{griebel_note_2013},
    recalling that, in our setting, we have $w_1 =\dots = w_n = 1$.
\end{proof}
\section{Numerical results}\label{sec: numerics}
In our numerical experiments, we employ Dubuc-Deslauriers 
polynomials of order $\widetilde{d} = 4$ or $\widetilde{d} = 6$, 
adjusted to the equidistance grid points on the interval $[0,1]$
in accordance with
\[
    X_j:= 
    \begin{cases} 
        \frac{1}{2}, &j =0,\\
        2^{-j}k: k = 0,1,\dots,2^j, & j>0.
    \end{cases}
\]
Moreover, we employ the \emph{Mat\'ern kernels}, already 
defined in Remark~\ref{rem: Matern}, where we always use
the length-scale parameter $\sigma = 1$. They are known to be 
nonlocal, making them a perfect example to illustrate the compression 
technique proposed in this article. Throughout our experiments, the 
function to be interpolated will always be $\boldsymbol{f} \equiv 
{\boldsymbol 1}$. This seems to be a very simple problem in principle. 
However, it is nontrivial for kernel interpolation since polynomials 
are not contained in the ansatz space $\boldsymbol{\mathcal{H}_X}$. 
Moreover, $\boldsymbol f$ is arbitrary smooth and independent of 
the dimension, which makes it a perfect test case when considering 
the tensor product space.

All subsequent computations have been carried out on a computer 
server with two AMD EPYC $9124$ CPUs with $16$ Cores each, with 
$2$TB of main memory. In order to obtain consistent timings, 
only a single core was used for all computations concerning time. 
On the other hand, for the error computation in the multivariate 
setting, the workload was parallelized by using MATLAB's 
\emph{Parallel Computing Toolbox} \cite{paralleltoolbox} on 
a thread-based pool with $32$ workers.

\subsection{Preconditioning}
Before discussing the matrix compression, we validate the 
preconditioning strategy introduced in Section~\ref{sec:kern mat}. 
Table~\ref{tab:condition_numbers} shows the condition numbers 
of the diagonally scaled kernel matrices for varying smoothness 
parameters $\nu = [0.5,1,1.5,2,2.5,3]$ and resolution levels 
$J = [0,\dots,12]$, using interpolets with $\widetilde{d}=4$ 
and $\widetilde{d}=6$ vanishing moments, respectively.

\begin{remark}
    The application of the wavelet transform requires a minimum 
    number of grid points for the discrete support of the primal 
    basis functions. Because the dual spaces are spanned by 
    interpolets, the size of this support is related to the 
    order of vanishing moments $\widetilde{d}$. Consequently, 
    the coarsest admissible resolution level $j_0$ increases 
    with $\widetilde{d}$, specifically the values $j_0 = 2$ 
    for $\widetilde{d} = 4$ and $j_0 = 3$ for $\widetilde{d} = 6$ 
    were used in our experiments. This poses no practical 
    limitation: for coarse discretizations, where the number 
    of grid points $N \leq |X_{j_0}|$, the wavelet transform 
    is simply not used. In such cases, the kernel is computed 
    directly using the exact dense matrix $K_X$ in the nodal 
    basis. Given that the degrees of freedom on these initial 
    levels are very small, using these matrices directly 
    gives no issues at all.
\end{remark}

As previously established, the diagonal scaling yields 
an asymptotically bounded condition number provided that 
the regularity of the dual basis satisfies $\widetilde{\gamma} 
> -q$. For the Mat\'ern kernel in one dimension, this condition 
translates to $\nu < \widetilde{\gamma} - 0.5$. The numerical 
results align with the regularity estimates provided in 
Table~\ref{tab:dd_regularity}. For $\widetilde{d}=4$ ($\alpha = 2$), 
the preconditioned matrices demonstrate uniform boundedness with 
respect to $J$ for $\nu \le 1.5$. By increasing the polynomial 
exactness to $\widetilde{d}=6$ ($\alpha = 2.83$), the stability 
range is extended up to $\nu = 2.5$. Furthermore, even outside 
this regime ($\nu = 3$), the condition number increases at a 
substantially slower rate than observed for $\widetilde{d} = 4$.
Indeed, one can expect that the condition number then grows only
at the rate $\mathcal{O}(2^{2(|q|-\widetilde{\gamma})J})$,
instead of the rate $\mathcal{O}(2^{2|q|J})$ for the
kernel matrix without any preconditioning.

\begin{table}[htbp]
    \centering    
    \begin{subtable}[t]{0.85\textwidth}
        \centering
        \caption{$\widetilde{d} = 4$}
        \resizebox{\textwidth}{!}{%
        \begin{tabular}{cc|cccccc}
        \toprule
        & $\nu$ & 0.5 & 1.0 & 1.5 & 2.0 & 2.5 & 3.0 \\
        $J$ & & & & & & & \\
        \midrule
        0 & & 1.00 & 1.00 & 1.00 & 1.00 & 1.00 & 1.00 \\
        1 & & 6.72 & 13.95 & 22.50 & 31.51 & 40.26 & 48.33 \\
        2 & & 25.74 & 120.24 & 415.97 & $1.18 \times 10^{3}$ & $2.87 \times 10^{3}$ & $6.07 \times 10^{3}$ \\
        3 & & 41.39 & 182.57 & 628.40 & $1.92 \times 10^{3}$ & $5.90 \times 10^{3}$ & $2.00 \times 10^{4}$ \\
        4 & & 47.34 & 209.56 & 758.86 & $2.51 \times 10^{3}$ & $9.51 \times 10^{3}$ & $5.02 \times 10^{4}$ \\
        5 & & 49.31 & 222.02 & 835.05 & $3.09 \times 10^{3}$ & $1.83 \times 10^{4}$ & $2.10 \times 10^{5}$ \\
        6 & & 49.92 & 228.11 & 875.15 & $3.58 \times 10^{3}$ & $3.54 \times 10^{4}$ & $8.83 \times 10^{5}$ \\
        7 & & 50.10 & 231.20 & 896.32 & $4.06 \times 10^{3}$ & $7.32 \times 10^{4}$ & $4.01 \times 10^{6}$ \\
        8 & & 50.16 & 232.82 & 907.30 & $4.54 \times 10^{3}$ & $1.54 \times 10^{5}$ & $1.77 \times 10^{7}$ \\
        9 & & 50.17 & 233.66 & 913.01 & $5.05 \times 10^{3}$ & $3.30 \times 10^{5}$ & $7.79 \times 10^{7}$ \\
        10 & & 50.18 & 234.11 & 915.95 & $5.59 \times 10^{3}$ & $7.11 \times 10^{5}$ & $3.40 \times 10^{8}$ \\
        11 & & 50.18 & 234.35 & 917.48 & $6.17 \times 10^{3}$ & $1.54 \times 10^{6}$ & $1.48 \times 10^{9}$ \\
        12 & & 50.18 & 234.48 & 918.27 & $6.55 \times 10^{3}$ & $3.33 \times 10^{6}$ & $6.45 \times 10^{9}$ \\
        \bottomrule
        \end{tabular}%
        }
    \end{subtable}\hfill
    \begin{subtable}[t]{0.85\textwidth}
        \centering
        \caption{$\widetilde{d} = 6$}
        \resizebox{\textwidth}{!}{%
        \begin{tabular}{cc|cccccc}
        \toprule
        & $\nu$ & 0.5 & 1.0 & 1.5 & 2.0 & 2.5 & 3.0 \\
        $J$ & & & & & & & \\
        \midrule
        0 & & 1.00 & 1.00 & 1.00 & 1.00 & 1.00 & 1.00 \\
        1 & & 6.72 & 13.95 & 22.50 & 31.51 & 40.26 & 48.33 \\
        2 & & 25.74 & 120.24 & 415.97 & $1.18 \times 10^{3}$ & $2.87 \times 10^{3}$ & $6.07 \times 10^{3}$ \\
        3 & & 100.41 & $1.02 \times 10^{3}$ & $7.94 \times 10^{3}$ & $5.24 \times 10^{4}$ & $3.03 \times 10^{5}$ & $1.57 \times 10^{6}$ \\
        4 & & 290.62 & $2.92 \times 10^{3}$ & $2.18 \times 10^{4}$ & $1.32 \times 10^{5}$ & $6.83 \times 10^{5}$ & $3.28 \times 10^{6}$ \\
        5 & & 377.96 & $4.00 \times 10^{3}$ & $3.26 \times 10^{4}$ & $2.09 \times 10^{5}$ & $1.14 \times 10^{6}$ & $6.43 \times 10^{6}$ \\
        6 & & 407.33 & $4.49 \times 10^{3}$ & $3.86 \times 10^{4}$ & $2.57 \times 10^{5}$ & $1.45 \times 10^{6}$ & $9.69 \times 10^{6}$ \\
        7 & & 416.53 & $4.71 \times 10^{3}$ & $4.18 \times 10^{4}$ & $2.84 \times 10^{5}$ & $1.65 \times 10^{6}$ & $1.42 \times 10^{7}$ \\
        8 & & 419.34 & $4.81 \times 10^{3}$ & $4.34 \times 10^{4}$ & $2.98 \times 10^{5}$ & $1.76 \times 10^{6}$ & $2.05 \times 10^{7}$ \\
        9 & & 420.18 & $4.86 \times 10^{3}$ & $4.42 \times 10^{4}$ & $3.05 \times 10^{5}$ & $1.83 \times 10^{6}$ & $3.04 \times 10^{7}$ \\
        10 & & 420.43 & $4.89 \times 10^{3}$ & $4.47 \times 10^{4}$ & $3.09 \times 10^{5}$ & $1.87 \times 10^{6}$ & $4.56 \times 10^{7}$ \\
        11 & & 420.50 & $4.90 \times 10^{3}$ & $4.49 \times 10^{4}$ & $3.11 \times 10^{5}$ & $1.90 \times 10^{6}$ & $6.97 \times 10^{7}$ \\
        12 & & 420.52 & $4.91 \times 10^{3}$ & $4.50 \times 10^{4}$ & $3.11 \times 10^{5}$ & $1.91 \times 10^{6}$ & $1.07 \times 10^{8}$ \\
        \bottomrule
        \end{tabular}%
        }
    \end{subtable}
    \caption{Condition numbers of the preconditioned kernel matrices 
    for Mat\'ern kernels with varying smoothness parameters $\nu$ and 
    varying resolution levels $J$.}
    \label{tab:condition_numbers}
\end{table}

\subsection{Univariate setting}
The second test taken into account concerns the matrix
compression on the unit interval $\Omega = [0,1]$, i.e., 
the case $n=1$. We set $\nu = \frac{11}{16}$, so that the 
Mat\'ern kernels' RKHS is isomorphic to $H^\frac{19}{16}([0,1])$. 
It is sufficient to use Dubuc-Deslauriers polynomials of order 
$\widetilde{d} = 4$. Moreover, as parameters for the compression 
of the matrix, we choose $a = a' = 2.5$, $a'' = 0.05$, and $d' = 10^{-3}$. 
Finally, when dealing with the measurement of the interpolation 
error, we consider also the domain $\Omega_{int} = [0.2,0.8]
\Subset\Omega$. Doing so, we are able to avoid the singularity 
effects on the boundary such that the doubling of the convergence 
rate can be observed, see \cite{GHM26} for a discussion of this.

The computational complexity of the compressed matrix is linear
when we check the distance criteria in \eqref{eq:a-priori compression A}
and \eqref{eq:a-priori compression B} in a top-down fashion by using the 
parent-children relation of the interpolets, compare \cite{implementation}.
The assembly of a single relevant matrix coefficients is always $\mathcal{O}(1)$
as the interpolets consist of a uniformly bounded number point evaluations.
This makes matrix assembly much simpler than in the case of samplets,
cf.~\cite{harbrecht_samplets_2022}. Indeed, Figure~\ref{fig: times n1} 
validates the theoretical linear rate. From this plot, it is easy 
to notice that the expected rate starts being observable at 
$N \sim 10^3$ interpolation points, that is $J = 10$ and $N = 1025$. 
Nonetheless, we also notice that, for smaller $N$, the time for the 
construction of the compressed matrix is smaller than $10^{-1}$ seconds,
so we can neglect the behavior of the left tail in the plot. 

\begin{figure}[htbp]
    \centering
    \begin{tikzpicture}
    \begin{axis}[
        width=10cm,
        height=8cm,
        xlabel={Number $N$ of interpolation points},
        ylabel={Time (s)},
        ylabel style={yshift=5pt},
        xmode=log,
        ymode=log,
        xmin=1,
        xmax=3.5*10^5,
        ymin=5*10^-4,
        ymax=12,
        grid=major,
        legend style={at={(1.05,0.5)}, anchor=west}
    ]
    
    \addplot[blue, mark=square*] table [x=Ns, y=times_sparse_matrix, col sep=comma] {times/results_n1.csv};
    \addlegendentry{Computation time}
    
    \addplot[
        black, 
        dashed, 
        thick, 
        domain=30:300000, 
        samples=100
    ] {0.00003 * x}; 
    \addlegendentry{$\mathcal{O}(N)$}
    
    \end{axis}
    \end{tikzpicture}
    \caption{Computation time of the sparse kernel matrix on the interval 
    $[0,1]$ compared to the number of grid points.}
    \label{fig: times n1}
\end{figure}
\begin{figure}[htbp]
    \centering
    \begin{tikzpicture}
    \begin{axis}[
        width=10cm,
        height=8cm,
        xlabel={Number $N$ of interpolation points},
        ylabel={$L^2$-approximation error},
        ylabel style={yshift=5pt},
        xmode=log,
        ymode=log,
        xmin=1,
        xmax=4*10^5,
        ymin=2*10^-15,
        ymax=1,
        grid=major,
        legend style={at={(1.05,0.5)}, anchor=west}
    ]
    
    \addplot[color=red, mark=*] table [x=Ns, y=errs_l2_int, col sep=comma] {times/results_n1.csv};
    \addlegendentry{$L^2(\Omega_{int})$}
    
    \addplot[color=blue, mark=triangle*] table [x=Ns, y=errs_l2_full, col sep=comma] {times/results_n1.csv};
    \addlegendentry{$L^2(\Omega)$}
    
    \addplot[
        red, 
        dashed, 
        thick, 
        domain=5:300000, 
        samples=2
    ] {0.05 * x^(-2*19/16)}; 
    \addlegendentry{$\mathcal{O}(N^{-\frac{19}{8}})$}
    
    \addplot[
        black, 
        dashed, 
        thick, 
        domain=5:300000, 
        samples=2
    ] {0.03 * x^(-19/16)};
    \addlegendentry{$\mathcal{O}(N^{-\frac{19}{16}})$}
    
    \addplot[
        blue, 
        dashed, 
        thick, 
        domain=5:300000, 
        samples=2
    ] {0.06 * x^(-19/16 - 0.5)};
    \addlegendentry{$\mathcal{O}(N^{-\frac{27}{16}})$}
    
    \end{axis}
    \end{tikzpicture}
    \caption{Convergence of the kernel interpolant on the interval $[0,1]$.}
    \label{fig: errs n1}
\end{figure}

Figure~\ref{fig: errs n1} shows the error between the 
interpolant and the true function. We observe the rate 
$2^{J(q - 0.5)}$ with respect to the $L^2(\Omega)$-norm, 
which is better than the rate $2^{Jq}$ which is expected 
if the right-hand side is in the RKHS, due to singularities
at the boundary. If we measure the error the interior domain
$\Omega_{int}$, i.e., with respect to the $L^2(\Omega_{int})$-norm,
then we are able to retrieve the expected decay $2^{Jq}$ of 
Theorem~\ref{th: error2}.

\begin{remark}
    Our numerical simulations have also shown that we obtain the 
    convergence rate $2^{2Jq}$ also for the $L^\infty(\Omega_{int})$-norm 
    of the error, both in the univariate and multivariate setting. In contrast, the rate with respect to the $L^\infty(\Omega)$-norm is only $2^{Jq}$. Since we did not prove $L^\infty$-error estimates in this article, the related plots are not presented.
\end{remark}

\subsection{Multivariate setting}
We finally consider the unit hypercube $\boldsymbol\Omega = [0,1]^n$ 
for $n\ge 2$. We again provide the benchmark for the computation times
versus the number $N$ of interpolation points in the sparse grid. 
To do so, we take into consideration the build up of the univariate 
compressed matrices needed for the sparse grid, and the computation 
of the sparse grid kernel interpolant. Obviously, for large $n$, 
the former becomes negligible. Figure~\ref{fig: times} shows that 
the computation times match the theoretical linear rate, caused by 
the number of nonzero coefficients of the compressed matrices.

\begin{figure}[htbp]
    \centering
    \begin{tikzpicture}
    \begin{axis}[
        width=10cm,
        height=8cm,
        xlabel={Number $N$ of interpolation points},
        ylabel={Time (s)},
        ylabel style={yshift=5pt},
        xmode=log,
        ymode=log,
        xmin=1,
        xmax=2*10^9,
        ymin=10^-3,
        ymax=3*10^4,
        grid=major,
        legend style={at={(1.05,0.5)}, anchor=west}
    ]
    
    \addplot[red, mark=square*] table [x=Ns, y=times_sparse_matrix, col sep=comma] {times/results_n2.csv};
    \addlegendentry{$n = 2$}
    
    \addplot[yellow!50!orange, mark=diamond*] table [x=Ns, y=times_sparse_matrix, col sep=comma] {times/results_n4.csv};
    \addlegendentry{$n = 4$}
    
    \addplot[blue, mark=star] table [x=Ns, y=times_sparse_matrix, col sep=comma] {times/results_n6.csv};
    \addlegendentry{$n = 6$}
    
    \addplot[green!60!black, mark=10-pointed star] table [x=Ns, y=times_sparse_matrix, col sep=comma] {times/results_n8.csv};
    \addlegendentry{$n = 8$}
    
    \addplot[orange!60!black, mark=pentagon*] table [x=Ns, y=times_sparse_matrix, col sep=comma] {times/results_n10.csv};
    \addlegendentry{$n = 10$}
    
    \addplot[violet!60!black, mark=triangle*] table [x=Ns, y=times_sparse_matrix, col sep=comma] {times/results_n12.csv};
    \addlegendentry{$n = 12$}
    
    \addplot[
        black, 
        dashed, 
        thick, 
        domain=1000:2000000000, 
        samples=100
    ] {0.000015 * x}; 
    \addlegendentry{$\mathcal{O}(N)$}
    
    \end{axis}
    \end{tikzpicture}
    \caption{Computation times of the sparse grid interpolant 
    on the hypercube $[0,1]^n$ for $n = 2,4,6,8,10,12$ dimensions.}
    \label{fig: times}
\end{figure}
\begin{figure}[htbp]
    \centering
    \begin{tikzpicture}
    \begin{axis}[
        width=10cm,
        height=8cm,
        xlabel={Number $N$ of interpolation points},
        ylabel={$L^2$-approximation error},
        ylabel style={yshift=5pt},
        xmode=log,
        ymode=log,
        ymax=1,
        xmin=1,
        xmax=100000000,
        grid=major,
        legend style={at={(1.05,0.5)}, anchor=west}
    ]
    
    \addplot[color=red!80!black, mark=*, x filter/.code={\ifnum\coordindex>12 \def\pgfmathresult{}\fi}] table [x=Ns, y=errs_l2_int, col sep=comma] {times/results_n1.csv};
    \addlegendentry{$n = 1$}
    
    \addplot[red, mark=square*] table [x=Ns, y=errs_l2_int, col sep=comma] {errors/results_n2.csv};
    \addlegendentry{$n = 2$}
    
    \addplot[orange, mark=triangle*] table [x=Ns, y=errs_l2_int, col sep=comma] {errors/results_n3.csv};
    \addlegendentry{$n = 3$}
    
    \addplot[yellow!50!orange, mark=diamond*] table [x=Ns, y=errs_l2_int, col sep=comma] {errors/results_n4.csv};
    \addlegendentry{$n = 4$}
    
    \addplot[green!60!black, mark=pentagon*] table [x=Ns, y=errs_l2_int, col sep=comma] {errors/results_n5.csv};
    \addlegendentry{$n = 5$}
    
    \addplot[blue, mark=star] table [x=Ns, y=errs_l2_int, col sep=comma] {errors/results_n6.csv};
    \addlegendentry{$n = 6$}
    
    \addplot[blue!50!violet, mark=otimes] table [x=Ns, y=errs_l2_int, col sep=comma] {errors/results_n7.csv};
    \addlegendentry{$n = 7$}
    
    \addplot[violet, mark=10-pointed star] table [x=Ns, y=errs_l2_int, col sep=comma] {errors/results_n8.csv};
    \addlegendentry{$n = 8$}
    
    \addplot[
        black, 
        dashed, 
        thick, 
        domain=1:4000, 
        samples=2
    ] {0.05 * x^(-19/8)};
    
    \addplot[
        black, 
        dashed, 
        thick, 
        domain=2:32000, 
        samples=300
    ] {1.5 * x^(-19/8)*ln(x)}; 
    
    \addplot[
        black, 
        dashed, 
        thick, 
        domain=5:180000, 
        samples=100
    ] {13 * x^(-19/8)*ln(x)^2}; 
    
    \addplot[
        black, 
        dashed, 
        thick, 
        domain=5:700000, 
        samples=100
    ] {40 * x^(-19/8)*ln(x)^3}; 
    
    \addplot[
        black, 
        dashed, 
        thick, 
        domain=5:2200000, 
        samples=100
    ] {80 * x^(-19/8)*ln(x)^4};
    
    \addplot[
        black, 
        dashed, 
        thick, 
        domain=5:8000000, 
        samples=100
    ] {120 * x^(-19/8)*ln(x)^5}; 
    
    \addplot[
        black, 
        dashed, 
        thick, 
        domain=5:17000000, 
        samples=100
    ] {90 * x^(-19/8)*ln(x)^6}; 
    
    \addplot[
        black, 
        dashed, 
        thick, 
        domain=5:50000000, 
        samples=100
    ] {80 * x^(-19/8)*ln(x)^7}; 
    \addlegendentry{$\mathcal{O}\big (N^{-\frac{19}{8}}\log^{n-1} N\big )$}
    
    \end{axis}
    \end{tikzpicture}
    \caption{Convergence of the sparse grid kernel interpolant 
    with respect to the $L^2([0.2,0.8]^n)$-norm. The observed rate
    of convergence is essentially $N^{-\frac{19}{8}}$.}
    \label{fig: errs int}
\end{figure}
\begin{figure}[htbp]
    \centering
    \begin{tikzpicture}
    \begin{axis}[
        width=10cm,
        height=8cm,
        xlabel={Number $N$ of interpolation points},
        ylabel={$L^2$-approximation error},
        ylabel style={yshift=5pt},
        xmode=log,
        ymode=log,
        ymin=0.00000001,
        ymax=1,
        xmin=1,
        xmax=100000000,
        grid=major,
        legend style={at={(1.05,0.5)}, anchor=west}
    ]
    
    \addplot[color=red!80!black, mark=*, x filter/.code={\ifnum\coordindex>12 \def\pgfmathresult{}\fi}] table [x=Ns, y=errs_l2_full, col sep=comma] {times/results_n1.csv};
    \addlegendentry{$n = 1$}
    
    \addplot[red, mark=square*] table [x=Ns, y=errs_l2_full, col sep=comma] {errors/results_n2.csv};
    \addlegendentry{$n = 2$}
    
    \addplot[orange, mark=triangle*] table [x=Ns, y=errs_l2_full, col sep=comma] {errors/results_n3.csv};
    \addlegendentry{$n = 3$}
    
    \addplot[yellow!50!orange, mark=diamond*] table [x=Ns, y=errs_l2_full, col sep=comma] {errors/results_n4.csv};
    \addlegendentry{$n = 4$}
    
    \addplot[green!60!black, mark=pentagon*] table [x=Ns, y=errs_l2_full, col sep=comma] {errors/results_n5.csv};
    \addlegendentry{$n = 5$}
    
    \addplot[blue, mark=star] table [x=Ns, y=errs_l2_full, col sep=comma] {errors/results_n6.csv};
    \addlegendentry{$n = 6$}
    
    \addplot[blue!50!violet, mark=otimes] table [x=Ns, y=errs_l2_full, col sep=comma] {errors/results_n7.csv};
    \addlegendentry{$n = 7$}
    
    \addplot[violet, mark=10-pointed star] table [x=Ns, y=errs_l2_full, col sep=comma] {errors/results_n8.csv};
    \addlegendentry{$n = 8$}
    
    \addplot[
        black, 
        dashed, 
        thick, 
        domain=5:4000, 
        samples=2
    ] {0.06 * x^(-19/16 - 0.5)};
    
    \addplot[
        black, 
        dashed, 
        thick, 
        domain=5:32000, 
        samples=300
    ] {0.32 * x^(-19/16 - 0.5)*ln(x)}; 
    
    \addplot[
        black, 
        dashed, 
        thick, 
        domain=10:190000, 
        samples=100
    ] {0.65 * x^(-19/16 - 0.5)*ln(x)^2}; 
    
    \addplot[
        black, 
        dashed, 
        thick, 
        domain=20:700000, 
        samples=100
    ] {0.5 * x^(-19/16 - 0.5)*ln(x)^3}; 
    
    \addplot[
        black, 
        dashed, 
        thick, 
        domain=40:2000000, 
        samples=100
    ] {0.28 * x^(-19/16 - 0.5)*ln(x)^4};
    \addplot[
        black, 
        dashed, 
        thick, 
        domain=100:6000000, 
        samples=100
    ] {0.11 * x^(-19/16 - 0.5)*ln(x)^5}; 
    \addplot[
        black, 
        dashed, 
        thick, 
        domain=100:17000000, 
        samples=100
    ] {0.035 * x^(-19/16 - 0.5)*ln(x)^6}; 
    \addplot[
        black, 
        dashed, 
        thick, 
        domain=1000:50000000, 
        samples=100
    ] {0.009 * x^(-19/16 - 0.5)*ln(x)^7};
    \addlegendentry{$\mathcal{O}\big (N^{-\frac{27}{16}}\log^{n-1} N\big )$}
    \end{axis}
    \end{tikzpicture}
    \caption{Convergence of the sparse grid kernel interpolant 
    with respect to the $L^2([0,1]^n)$-norm. The observed rate
    of convergence is essentially $N^{-\frac{27}{16}}$.}
    \label{fig: errs full}
\end{figure}

Lastly, Figures~\ref{fig: errs int} and \ref{fig: errs full} 
show the $L^2$-errors with respect to the full domain
$\boldsymbol\Omega$ and the domain $\boldsymbol\Omega_{int}
= [0.2,0.8]^n\Subset\boldsymbol\Omega$, respectively. 
Since the error is approximated by the tensorized three 
point Gauss-Legendre quadrature rule, its computation 
becomes very costly in high dimensions. Thus, we only compute 
the errors up to $n = 8$ dimensions. As can be seen from the 
figures, it is possible to observe a similar behavior to the error 
in the univariate case, while for the interior domain the convergence 
is the expected one, in the full domain we gain a factor of $N^{-0.5}$. 
Another observation is that the constant in front of the approximation 
rate, which is well known to increase with respect to $n$ in sparse 
grid constructions, increases much slower in the full domain, 
compare Figure~\ref{fig: errs full}.
\section{Conclusion} \label{sct:conclusio}
In this article, we proposed an efficient method for the 
compression of kernel matrices in reproducing kernel Hilbert 
spaces. By employing interpolets, we established an a-priori 
and a-posteriori compression framework that reduces the number 
of coefficients in the kernel matrix to $\mathcal{O}(N)$, while 
maintaining the optimal asymptotic approximation rate. Here,
$N$ denotes the number of grid points. 

Through the sparse grid combination technique, the compression 
strategy was extended to the high-dimensional unit cube $[0,1]^n$, 
overcoming the curse of dimensionality. We are thus able to 
efficiently compute surrogate models on the unit $n$-cube. 
Numerical results were provided to validate the theoretical 
findings.

We emphasize that the compression results do not require the 
reproducing kernel to be shift-invariant, since the decay estimates 
for the compression analysis rely only on derivative bounds of 
the reproducing kernel away from the diagonal, paired with the 
vanishing moments of the wavelets used in this article. 
Therefore, it is easy to see that the methodology does not 
require the grid points to be equidistant: the present 
approach also applies to quasi-uniform point distributions 
which are smooth perturbations of the underlying equidistant 
grid.
\printbibliography
\end{document}